\newcommand{\stp}{\,\,\,}
\newcommand{\demph}[1]{\emph{#1}}
\newcommand{\mcal}[1]{\mathcal{#1}}
\newcommand{\vphi}{\varphi}
\renewcommand{\vec}[1]{\mathbf{#1}}
\newcommand{\eqdef}{:=}
\newcommand{\COS}{dioid\xspace}
\newcommand{\mCOS}{\ensuremath{\mathfrak{m}}-\COS}
\newcommand{\m}{\ensuremath{\mathfrak{m}}}
\newtheorem{thm}{Theorem}[section]
\newtheorem{crl}[thm]{Corollary}
\newtheorem{lem}[thm]{Lemma}
\newtheorem{prop}[thm]{Proposition}
\newtheorem{puz}{Puzzle}
\theoremstyle{definition}
\newtheorem{defi}[thm]{Definition}
\newtheorem*{defi*}{Definition}
\theoremstyle{remark}
\newtheorem{eg}[thm]{Example}
\newtheorem{rk}[thm]{Remark}
\newcommand{\st}[2]{\left\{#1\,:\,#2\right\}}
\newcommand{\pset}[1]{\mathcal{P}(#1)}
\newcommand{\psetr}[1]{\mathcal{P}_1(#1)}
\newcommand{\crd}[1]{\left|#1\right|}
\newcommand{\union}{\cup}
\newcommand{\Union}{\bigcup}
\newcommand{\ffam}{\ensuremath{\mathcal{F}}}
\newcommand{\flr}[1]{\lfloor #1 \rfloor}
\newcommand{\B}{\mathbb{B}}
\newcommand{\N}{\mathbb{N}}
\newcommand{\Z}{\mathbb{Z}}
\newcommand{\Q}{\mathbb{Q}}
\newcommand{\R}{\mathbb{R}}
\newcommand{\C}{\mathbb{C}}
\newcommand{\F}{\mathbb{F}}
\newenvironment{mx}{\left(\begin{matrix}}{\end{matrix}\right)}
\DeclareMathOperator{\per}{per}
\newcommand{\cun}{\mathrel{\ooalign{$\cup$\cr
  \hidewidth\raise.25ex\hbox{{\scriptsize$\times\mkern1.413mu$}}\cr}}}
\begin{document}
\title[Linear independence over $\mathfrak{m}$-dioids]{Linear independence over naturally-ordered semirings with applications to dimension arguments in extremal combinatorics}
\author{Gal Gross}
\address{Department of Mathematics, University of Toronto,
Toronto, ON, Canada M5S 2E4}
\email{g.gross@mail.utoronto.ca}
\date{\today}

\begin{abstract}
    A family of subsets $\ffam \subseteq \pset{\{1, 2, \ldots, n\}}$ has the \emph{disparate union property} if any two disjoint subfamilies $\ffam_1, \ffam_2 \subseteq \ffam$ have distinct unions $\Union \ffam_1 \neq \Union \ffam_2$; what is the maximal size of a family with the disparate union property? Is there a simple and efficiently computable characterization of size-maximal families?

    This paper highlights a class of partially-ordered semirings---difference ordered semirings with a multiplicatively absorbing element---and shows it is common and easily constructed. We prove that a suitably modified definition of linear independence for semimodules over such semirings enjoys the same maximality property as for vector spaces, and can furthermore be efficiently detected by the bideterminant. These properties allow us to extend dimension argument in extremal combinatorics and provide simple and direct solutions to the puzzles above. 
\end{abstract}

\subjclass[2020]{Primary 15A80; Secondary 05D05; 06F05.}

\keywords{Semring, semimodule, ordered semiring, bideterminant, idempotency, difference ordered semiring, naturally ordered semiring, absorbing element}

\maketitle

\section{Motivation}
\label{sec:motivation}
A family of subsets $\ffam \subseteq \pset{\{1, 2, \ldots, n\}}$ has the \demph{distinct union property} if any two distinct subfamilies $\ffam_1, \ffam_2 \subseteq \ffam$ have distinct unions $\Union \ffam_1 \neq \Union \ffam_2$. (The union of the empty family is taken to be the emptyset.) For example, the family of singletons $\ffam = \{\{1\}, \{2\}, \ldots, \{n\}\}$ has the distinct union property. It is an elementary exercise to prove that this is the \emph{only} family of size $\geq n$ that has the distinct union property, and so it is the unique maximal family with this property. (The proof appears at the end of the section so as not to spoil the puzzle.) Let us therefore relax the restriction: a family of subsets $\ffam \subseteq \pset{\{1, 2, \ldots, n\}}$ has the \emph{disparate union property} if any two disjoint subfamilies $\ffam_1, \ffam_2 \subseteq \ffam$ (not both empty) have distinct unions $\Union \ffam_1 \neq \Union \ffam_2$. Again, the family of singletons $\ffam = \{\{1\}, \{2\}, \ldots, \{n\}\}$ has the disparate union property, but it is no longer the only example. Each of $\ffam' = \{\{1\}, \{1, 2\}, \{1, 2, 3\}, \ldots, \{1, 2, \ldots, n\}\}$ and $\ffam'' = \{\{1\}, \{1, 2\}, \{1, 3\}, \ldots, \{1, n\}\}$ is a family of size $n$ with the disparate union property. In fact, each of these examples is maximal in the sense that no set can be added without destroying the property. Are these examples size-optimal?

\begin{quote}
    \begin{puz}
    \label{puz:howlarge}
    What is the largest possible size $m$ of a family $\ffam\subseteq \pset{\{1, 2, \ldots, n\}}$ with the disparate union property?
    \end{puz}
\end{quote}

These kind of questions are typical of extremal combinatorics. Let $[n] \eqdef \{1, 2, 3, \ldots, n\}$, a subset $\ffam\subseteq \pset{[n]}$ is called a \demph{family} over $[n]$ (or an \demph{$n$-family}, also known as a \demph{set-system} over $[n]$). The prototypical extremal combinatorics question is: 
\begin{quote}
    \textbf{Question}: Given some property $P$, what is the largest possible $n$-family satisfying $P$?
\end{quote}

Some famous examples include:
\begin{enumerate}[label=(\roman*)]
    \item For $P_{Sp}$ the property that ``no set is a (strict) subset of another'' (i.e., an $\subset$-antichain, also called a \demph{clutter}), Sperner's Lemma \cite{Spe28} states that ${n\choose \flr{n/2}}$ is the maximum.
    \item For $P_{EKR}$ the property that ``every set has size $k$ and they are pairwise disjoint'', the Erd\H{o}s-Ko-Rado Theorem \cite{EKR61} states that if $k\leq n/2$ then ${n-1\choose k-1}$ is the maximum.
    \item\label{oddtown} For $P_{2}$ the property that ``every set has an odd cardinality and every intersection of two distinct sets has even cardinality'', the Oddtown Theorem \cite{Ber69} states that $n$ is the maximum.
    \item\label{mod6} For $P_6$ the property that ``every set has cardinality not divisible by $6$, and every intersection of two distinct sets has cardinality divisible by $6$'', the maximum size is not known, but a 1998 upper bound attributed to M.~Szegedy \cite[Ex.~1.1.28]{BF22} is $2n-2\log 2n$.
    \item The property $P_u$ that ``no two disjoint subfamilies have the same union'' is studied in this paper.
\end{enumerate}

Extremal problems such as the above have been solved through a variety of methods, as surveyed by Anderson \cite{And87} and more recently by Jukna \cite{Juk11}, with monographs dedicated to probabilistic methods \cite{AS15}, polynomial methods \cite{Gut16}, and linear algebraic methods \cite{BF22}, among others. It is this last which concerns us at present.

T.~Gowers \cite{Gow08} (later also posted as \cite{Tri08}) proposed a heuristic for  combinatorial questions amenable to linear algebraic methods: when there seems to be many different maximal examples, look for a \emph{dimension argument}. That is, represent the objects as vectors and use the property that a list of linearly independent vectors in an $n$-dimensional vector space has size at most $n$  (for an overview of different linear-algebraic methods in combinatorics see \cite[chs.~13--14]{Juk11}). We propose to solve Puzzle \ref{puz:howlarge} by precisely such a method, and in Section \ref{subsec:unpack} we present a short solution using linear algebra. 

The key idea behind dimension arguments is to represent each set $S \in \ffam$ by its incidence vector $\vec{v}_S$ with $1$ in position $i$ if and only if $i \in S$

\[(\vec{v}_S)_i = \begin{cases}
    1 &\mbox{ if } i \in S,\\
    0 &\mbox{ if } i \notin S.
\end{cases}\]

One then works in a vector space whose operations correspond to the restrictions imposed by $P$ in a natural way. For example, recounting Berlekamp's proof \cite{Ber69}: working in $\F_2^n$, the restrictions $P_2$ from \ref{oddtown} are translated to the conditions on incidence vectors: 
\begin{itemize}
    \item A given set has odd cardinality if and only if the inner-product of the corresponding incidence vector with itself is nonzero.
    \item Two sets have intersection of even cardinality if and only if the inner-product of the corresponding incidence vectors is zero.
\end{itemize}
Taken together, these conditions imply that the set of incidence vectors is linearly independent in $\F_2^n$, and so its size cannot exceed $n$. Dimension arguments are  appealing in their simplicity and directness. One important limitation is the need to work over a (possibly finite) field. For example, the argument fails to generalize in a straightforward way to $P_6$ from \ref{mod6} above, since $\Z/6\Z$ is not a field.

For the problem at hand, involving the condition $P_u$, after representing the sets in $\ffam$ by incidence vectors, we would like a linear dependence of the form
\[\lambda_1\vec{v}_1+\lambda_2\vec{v}_2+\cdots+\lambda_k\vec{v}_k = \mu_1\vec{u}_1 + \mu_2\vec{u}_2 + \cdots + \mu_j\vec{u}_j\]
to indicate that the union of the sets corresponding to the $\vec{v}_i$'s is the same as the union of the sets corresponding to the $\vec{u}_i$'s. However, the union of sets is an idempotent operation $S \union S = S$, whereas field operations are cancellative: $1+1 \neq 1$. This is a fundamental limitation of working with incidence vectors over a field. 

We propose to work over an algebraic structure where the ``addition'' operation is idempotent, and where the linear-algebraic argument above works precisely---namely, a semimodule over a particular kind of semiring. Many ideas from linear algebra carry over to the setting of semimodules over semirings, as is surveyed by Golan \cite{Gol99} in great detail and generality, and by Gondran and Minoux \cite{GM08} for the particular (but very common) case of naturally-ordered semirings (also called difference-ordered semirings and canonically-ordered semirings); see also G\l{}azek's guide \cite{Gla02} for an exhaustive bibliography up to 2001. However, equivalent notions of linear independence in a vector space over a field diffract into many inequivalent notions in semimodules over semirings. Moreover, even semimodules over difference-ordered semirings is too large a class in general, they do not all enjoy convenient analogues of linear-algebraic properties. We introduce a particular class of difference-ordered semirings---those which possess a multiplicatively-absorbing element---and show they are easily constructed and especially suitable for questions involving families of sets. We then prove that this class enjoys nice linear-algebraic properties which allow us to solve Puzzle \ref{puz:howlarge} in a straightforward manner.

In addition to being of independent interest, this extension not only presents a straightforward solution of the puzzle, which can be easily ``translated back'' to linear algebra, but also allows us to characterize all maximal solutions with a simple determinant-like criterion that is nevertheless not easy to ``translate'' into linear algebra.\vspace{-0.8cm}

\begin{quote}
    \begin{puz}
    \label{puz:criterion} Given a family of $\pset{\{1, 2, \ldots, n\}}$ of size $m$, is there an efficient way to determine whether it has the disparate union property?
    \end{puz}
\end{quote}

We conclude this section with an overview of the remainder of the paper. In Section \ref{sec:COS} we introduce canonically-ordered semirings (\emph{dioids}) and highlight a particular kind (\emph{\mCOS}) which is central to our investigation; to my knowledge, this particular kind has not been singled out for study as a class before. In Section \ref{sec:lin} we introduce semimodules over semirings, present one particular notion of linear independence, and show that it allows for dimension arguments over \mCOS{s}, this allows us to solve Puzzle \ref{puz:howlarge} by a direct argument as indicated above; subsection \ref{subsec:unpack} ``unpacks'' the solution and ``translates'' it to linear algebra. In Section \ref{sec:bid} we introduce the bideterminant, which is the semimodule-generalization of the determinant, and prove it can be used to detect linear independence in semimodules over \mCOS{s}, which leads to a solution of Puzzle \ref{puz:criterion}.

Readers familiar with difference-order semirings can start at Section \ref{sec:lin}. For those wishing to skip directly to the novel contributions, they are Lemma \ref{lem:forward}, Theorem \ref{thm:bool}, Theorem \ref{thm:toR}, Theorem \ref{thm:fromN}, a simple linear-algebraic proof of the known Theorem \ref{thm:boolbid}\footnote{This result has been proved before \cite{GM77,GM78,GM84} via an ingenious graph-theoretic argument for the larger class of selective-invertible dioids, of which tropical rings---also known as max-plus algebras or max-algebras---are a notable example.}, and Corollary \ref{crl:mbid}. Puzzles \ref{puz:howlarge} and \ref{puz:criterion} are also novel and their solutions are presented in Corollaries \ref{crl:solhowlarge} and \ref{crl:solcrit}.%

\begin{proof}[Solution to the distinct union property puzzle.]
Let $\ffam \subseteq \pset{\{1, 2, \ldots, n\}}$ be a family of size $\geq n$ with the distinct union property. Note that $\emptyset \notin \ffam$, for otherwise we may choose $\ffam_1 = \{\emptyset\}$ and $\ffam_2 = \emptyset$. There are $2^{\crd{\ffam}}\geq 2^{n}$ subfamilies (including the empty family), and if each has distinct union then $\crd{\ffam} \leq n$ and these unions are precisely the $2^n$ subsets of $\{1, 2, \ldots, n\}$. In particular, for each singleton $\{a\}$, there is some subfamily $\ffam_a \subseteq \ffam$ with $\Union\ffam_a = \{a\}$, so that $\ffam_a = \{\{a\}\}$ (since $\emptyset \notin \ffam$). This proves that $\{\{1\}, \{2\}, \ldots, \{n\}\} \subseteq \ffam$, and since $\crd{\ffam} \leq n$ we conclude that $\ffam = \{\{1\}, \{2\}, \ldots, \{n\}\}$.
\end{proof}

\section{Naturally-ordered semirings}\label{sec:COS}
Recall that a field is often conceptualized as the melding of two groups, whereas a ring is conceptualized as a melding of a group and a monoid, i.e., the requirement of multiplicative inverses is dropped. A semiring is a further generalization where the requirement of additive inverses is also dropped, and can be conceptualized as the melding of two monoids. The definitions in this section are standard unless noted otherwise, the lemmas all follow directly from the definition and proofs are omitted. The reader is referred to Golan's monograph \cite{Gol99} for an extensive overview of semirings and their applications.

\begin{defi}[Semiring]
    A \emph{semiring} is a triple $(S, +, \times)$ with $S$ a nonempty set and $+, \times: S\times S \to S$ binary operations such that:
\begin{enumerate}[label=(\alph*)]
    \item $(S, +)$ is a commutative monoid with identity $0$. That is, $+$ is associative, commutative, and for every $s \in S$ we have $s+0 = 0+s = 0$.
    \item $(S, \times)$ is a monoid (not necessarily commutative) with two-sided identity $1$. That is, $\times$ is associative and for every $s \in S$ we have $1\times s = s\times 1 = s$.
    \item Multiplication (left- and right-)distributes over addition: for any $a, b, c \in S$,
        \begin{align*}
            &a\times (b+c) = (a\times b) + (a\times c),\\
            &(a+b)\times c = (a\times c) + (b\times c).
        \end{align*}
        \item\label{ax:0} The additive identity is multiplicatively absorbing: for any $a \in M$, $a\times 0 = 0\times a = 0$.
\end{enumerate}
The semiring $(S, +, \times)$ is said to be \emph{commutative} exactly when $(S, \times)$ is a commutative monoid. 
\end{defi}

This paper focuses on one kind of semirings, which is also an ordered semiring, or more precisely \demph{difference ordered semiring} in Golan's terminology \cite[p.~228]{Gol99}; following Gondran and Minoux \cite{GM08}, we call difference ordered semirings \demph{dioids}\footnote{Golan points out \cite[p.~3]{Gol99} that both Baccelli et al.~\cite{Bac+92} and Gunawardena \cite{Gun98} have used that same term ``dioid'' to denote additively-idempotent semirings. 

Recall that an element $m$ of a monoid $(M, +)$ is \demph{idempotent} if $m+m=m$, and the monoid $(M, +)$ is \demph{idempotent} if each of its elements is idempotent. Any idempotent commutative monoid $(M, +)$ is canonically-ordered {\cite[ch.~1]{GM08}}. The Gondran and Minoux's sense of ``dioid'' can thus be seen as a generalization of the previous use of the term, so we hope it shall not cause undue confusion. 
}. In their monograph on this algebraic structure, Gondran and Minoux demonstrate that it is particularly rich and suitable to model many problems from graph theory and optimization; they conclude the preface to their book \cite{GM08} with: ``One of the ambitions of this book is thus to show that, as complements to usual algebra, based on the construct ``Group-Ring-Field'', other algebraic structures based on alternative constructs, such as ``Canonically ordered monoid-dioid-distributive lattice'' are equally interesting and rich, both in terms of mathematical properties and of applications.''

\begin{defi}[Canonical order {\cite[ch.~1]{GM08}}]
    Let $(M, +)$ be a commutative monoid and define the binary relation $\leq$ on $M$ as follows: for any $a, b \in M$, we say that $a \leq b$ if and only if there exists some $c \in M$ such that $a+c = b$. Then $\leq$ is a preorder (i.e., reflexive and transitive) on $M$.

    We call $(M, +)$ \demph{canonically ordered} just in case that the canonical preorder is a partial order (i.e., is also antisymmetric).
\end{defi}

\begin{rk}
    Note that for any \emph{group} $(M, +)$ the canonical preorder is trivial: $a\leq b$ for any $a, b \in M$. Conversely, if $(M, +)$ is canonically-ordered then, other than $0$, no element has an inverse. Thus, canonically ordered monoids and groups (and consequently rings and dioids) lie on opposite ends of a spectrum, see Gondran and Minoux \cite{GM08} for further discussion.
\end{rk}

Note that canonically-ordered monoids enjoy the zerosumfree property.

\begin{defi}[Zerosumfree {\cite[p.~4]{Gol99}}]
    A commutative $(M, +)$ monoid is \demph{zerosumfree} if for every $a, b \in M$: $a+b = 0$ implies $a = b = 0$.
\end{defi}

\begin{lem}\label{lem:zerosum}
    Any canonically-ordered monoid $(M, +)$ is zerosumfree.
\end{lem}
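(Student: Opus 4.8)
The plan is to prove that a canonically-ordered monoid $(M, +)$ is zerosumfree directly from the definition of the canonical order, using only antisymmetry. The statement to establish is: if $a + b = 0$ for some $a, b \in M$, then $a = b = 0$. First I would observe that the zero element $0$ is always a lower bound for every element under the canonical preorder, since $0 + m = m$ witnesses $0 \leq m$ for any $m \in M$. Thus it suffices to show that $a \leq 0$ (and symmetrically $b \leq 0$), because combining $0 \leq a$ and $a \leq 0$ with antisymmetry yields $a = 0$.

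The key step is to exhibit the witnessing element for the relation $a \leq 0$. Given the hypothesis $a + b = 0$, the element $b$ itself serves as the witness: the equation $a + b = 0$ is precisely the statement that there exists some $c \in M$ (namely $c = b$) with $a + c = 0$, which is the definition of $a \leq 0$. By the symmetric argument (using commutativity of $+$, so that $b + a = 0$ as well), we get $b \leq 0$ with witness $a$. Since we already have $0 \leq a$ and $0 \leq b$ unconditionally, antisymmetry of the canonical partial order forces $a = 0$ and $b = 0$.

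I do not anticipate any real obstacle here, as the argument is a one-line unwinding of the definitions; the only point requiring care is that antisymmetry is exactly the hypothesis distinguishing a canonically-ordered monoid (a partial order) from a general commutative monoid (where the canonical relation is merely a preorder). This is essential: in a group the relation is total and trivial, and indeed groups are not zerosumfree, so antisymmetry cannot be dispensed with. I would state the proof compactly, emphasizing that the witness for each of the two required inequalities is simply the other summand.
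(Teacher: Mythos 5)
Your proof is correct and is exactly the direct unwinding of the definitions that the paper has in mind; the paper omits the proof entirely, noting that the lemmas of that section ``all follow directly from the definition.'' Your argument---using $a+b=0$ as the witness for $a \leq 0$, the trivial witness for $0 \leq a$, and antisymmetry to conclude $a=0$ (and symmetrically $b=0$)---is the standard one-line justification, with the correct emphasis that antisymmetry is the essential hypothesis.
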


Semirings whose additive monoid is canonically ordered are called \demph{dioids}.

\begin{defi}
    A \demph{dioid} (also known as \demph{difference ordered semiring}, \demph{naturally ordered semiring}, and \demph{canonically-ordered semiring}) is a semiring $(M, +, \times)$ such that the monoid $(M, +)$ is canonically-ordered. It is said to be \demph{commutative} just in case it is commutative as a semiring (i.e., if $(M, \times)$ is commutative).
\end{defi}

\begin{rk}
The canonical order $\leq$ associated with $(M, +)$ is defined entirely in terms of addition, and it is important to note that it respects multiplication \cite[ch.~1]{GM08}: suppose $a \leq b$, then for any $m \in M$ we have $a\times m \leq b\times m$.
\end{rk}

Dioids are natural structures which abound throughout mathematics. Gondran and Minoux \cite{GM08} give many examples. Two intensely-studied classes include:
\begin{itemize}
    \item Distributive lattices $L$ with a minimum $0$ and a maximum $1$ form a commutative \COS $(L, \vee, \wedge)$ under the join and meet operations. As a concrete example, for any set $A$, the triple $(\pset{A}, \union, \cap)$ is a commutative \COS with $0 = \emptyset$ and $1 = A$. 
    \item The \demph{tropical semirings} $(\R\union\{-\infty\}, \max, +)$ and $(\R\union\{+\infty\}, \min, +)$ are each a commutative \COS. (The reader is referred  to Butkovi\v{c}'s monograph \cite{But10} for an extensive overview of max-algebras and their applications.)
\end{itemize}

An element $a$ of a commutative monoid $(M, +)$ is \demph{absorbing} if $a+m = a$ for every $m \in M$; Golan \cite{Gol99} calls such an element \demph{infinite}. If the monoid is canonically-ordered, such an element (if it exists) is necessarily the $\leq$-greatest element. Conversely, for canonically-ordered monoids, the $\leq$-greatest element $a$ is also additively absorbing: this is because $a+m$ is greater than $a$, and if $a$ is greatest then antisymmetry implies that $a+m = a$. To help distinguish it from multiplicatively-absorbing elements later on, we propose to name this element the \demph{top element}.

\begin{lem}
    Any finite nonempty canonically-ordered monoid $(M, +)$ has top element $\top = \sum_{m \in M}m$.
\end{lem}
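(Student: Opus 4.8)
The plan is to verify directly that the element $s \eqdef \sum_{m \in M} m$ is the $\leq$-greatest element of $M$. By the discussion immediately preceding the lemma, in a canonically-ordered monoid the greatest element (when it exists) is exactly the additively-absorbing element that was named the top element, so exhibiting $s$ as a greatest element is all that is required. As a preliminary I would note that, because $(M, +)$ is commutative and associative, the sum $s$ over the finite set $M$ is well-defined independently of the summation order, and that $s \in M$ by closure of $+$ together with the finiteness of $M$.

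The core step is to show $a \leq s$ for every $a \in M$. Fixing $a$, I would isolate its occurrence as a summand and write $s = a + c$, where $c \eqdef \sum_{m \in M\setminus\{a\}} m$. This $c$ lies in $M$, being a finite sum of elements of $M$ (with the empty-sum convention $c = 0$ covering the degenerate case $M = \{a\}$, which forces $a = 0 = s$). The mere existence of some $c \in M$ with $a + c = s$ is, by definition, the statement $a \leq s$ in the canonical preorder. Since $a$ was arbitrary, $s$ is an upper bound for all of $M$, and as $s \in M$ it is a greatest element.

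Finally I would appeal to antisymmetry---available because $(M,+)$ is canonically ordered---to conclude that this greatest element is unique and hence is the top element $\top$, now explicitly identified as $\sum_{m \in M} m$. I anticipate no serious obstacle: the argument is essentially a one-line application of the canonical order once the sum is set up. The only points meriting care are the well-definedness of the unordered finite sum, guaranteed by commutativity and associativity, and the bookkeeping ensuring that $a$ genuinely appears among the summands, so that the remaining sum $c$ is again an element of $M$.
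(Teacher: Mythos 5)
Your proof is correct and is essentially the same as the paper's: both isolate $a$ as a summand, write $\top = a + \sum_{m \in M\setminus\{a\}} m$ (with the empty-sum convention), and conclude $a \leq \top$ from the definition of the canonical order. The extra remarks on well-definedness of the unordered sum and the degenerate singleton case are fine but not needed beyond what the paper's one-line argument already covers.
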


\begin{proof}
    For any $a \in M$ we have $a+\sum_{m \in M\setminus\{a\}}m = \top$ (the empty-sum is interpreted as $0$), which proves $a \leq \top$.
\end{proof}

The example of $(\pset{A}, \union, \cap)$ illustrates that an additively-absorbing element ($\top = A$) need not be multiplicative-absorbing (indeed, in that example it is the multiplicative identity, which is as far from multiplicatively absorbing as possible). On the other hand, by the semiring axioms, $0$ is always multiplicatively absorbing. Semirings with nonzero multiplicatively absorbing element $\m\neq 0$ are our main focus in this paper. As far as I am aware, \mCOS as a class have never been singled out for investigation, and never in the context of linear algebra (i.e., semimodules).

\begin{defi}\label{def:mCOS}
    An \demph{\mCOS} is a commutative \COS $(M, +, \times)$ which has a nonzero multiplicatively absorbing element; that is, an element $0 \neq \m \in M$ such that $a\times \m = \m$ for every nonzero $0 \neq a \in M$.
\end{defi}

If such an element $\m$ exists then it is unique since if $a \in M$ is another nonzero multiplicatively absorbing element then $a = a\times \m = \m$. 

LaGrassa \cite[Example 1.1]{LaG95} shows that any commutative monoid $(M, \times)$ can be embedded in an \mCOS as follows (adapting the construction to our notation): first, add a new absorbing element $\m$. Then, define $a+b = \m$ for every $a, b \in M\union\{\m\}$. Finally, add $0$ (which is both the additive identity and multiplicatively absorbing). LaGrassa \cite{LaG95} studies the ideals of such a semiring in relation to the monoid ideals of the monoid $(M, \times)$ from which it was constructed. 

LaGrassa's construction already presents a large collection of examples. We now prove that the powerset construction is another rich source of examples; the powerset construction is especially suited for questions regarding families of sets.

\begin{prop}[{\cite[Example 1.10]{Gol99}}]
    Let $(M, \times)$ be a commutative monoid (with identity $1_M$). The \demph{power-\COS} of $M$, denoted $2^{M}$, is the triple $(\pset{M}, \union, \otimes)$ with $\otimes$ defined pointwise:
    \[A\otimes B \eqdef \st{a\times b}{a\in A, b\in B}.\]
    The power-\COS is an idempotent \COS with $0 = \emptyset$, $1 = \{1_M\}$, and top element $\top = M$. Moreover, $2^M$ is commutative if and only if $(M, \times)$ is commutative.
\end{prop}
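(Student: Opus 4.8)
The plan is to verify the semiring axioms directly for $(\pset{M}, \union, \otimes)$, since the claim is essentially a bookkeeping exercise once the operation $\otimes$ is understood. First I would confirm that $(\pset{M}, \union)$ is a commutative monoid: associativity and commutativity of $\union$ are standard, and the empty set $\emptyset$ is the additive identity, so $0 = \emptyset$. This monoid is manifestly idempotent since $A \union A = A$, and by the remark following the definition of canonical order, any idempotent commutative monoid is canonically ordered; hence $(\pset{M}, \union, \otimes)$ will be a \COS once the multiplicative axioms are checked, and in fact an idempotent one. The top element is $M$ because $A \union M = M$ for every $A \subseteq M$, i.e.\ $M$ is additively absorbing.

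Next I would check that $(\pset{M}, \otimes)$ is a monoid. Associativity of $\otimes$ follows from associativity of $\times$ in $M$: unwinding the definition, both $(A \otimes B) \otimes C$ and $A \otimes (B \otimes C)$ equal $\st{a\times b\times c}{a\in A,\,b\in B,\,c\in C}$. The two-sided identity is $\{1_M\}$, since $A \otimes \{1_M\} = \st{a \times 1_M}{a \in A} = A$ and symmetrically on the left; thus $1 = \{1_M\}$.

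The remaining axioms are distributivity and the absorbing property of $0$. For distributivity I would use that $\otimes$ distributes over arbitrary unions: $A \otimes (B \union C) = \st{a \times x}{a \in A,\, x \in B \union C}$, and splitting the condition $x \in B \union C$ into $x \in B$ or $x \in C$ yields exactly $(A\otimes B)\union(A\otimes C)$; the right-distributive law is identical by symmetry. The absorbing property $A \otimes \emptyset = \emptyset$ holds vacuously, as there is no $b \in \emptyset$ to form any product. Finally, commutativity of $2^M$ is equivalent to commutativity of $(M, \times)$: if $\times$ is commutative then $a \times b = b \times a$ gives $A \otimes B = B \otimes A$ setwise; conversely, applying commutativity of $\otimes$ to singletons $\{a\} \otimes \{b\} = \{b\} \otimes \{a\}$ forces $a \times b = b \times a$ for all $a, b \in M$.

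I do not anticipate a genuine obstacle here, as every step reduces to the corresponding property of $(M, \times)$ together with the set-theoretic behaviour of union. The one point requiring a moment's care is the direction of the canonical-order argument: rather than exhibiting the canonical partial order by hand, I would invoke the cited fact that idempotency of the additive monoid already guarantees canonical ordering, which shortens the argument considerably and is why the proposition asserts $2^M$ is an \emph{idempotent} \COS.
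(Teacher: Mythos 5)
Your proposal is correct and follows essentially the same route as the paper's proof: a direct verification of the semiring axioms, using idempotency of $\union$ to get the canonical order, the pointwise definition for distributivity and absorption, and the embedding of $M$ as singletons for the converse direction of the commutativity claim. The only difference is that you spell out a few steps (associativity of $\otimes$, the multiplicative identity) that the paper dismisses as ``easy to see,'' which is fine.
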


\begin{proof}
    It is clear that $(\pset{M}, \union)$ is an idempotent monoid with $0 = \emptyset$, and hence canonically-ordered. It is also easy to see that $(\pset{M}, \times)$ is a monoid with $1 = \{1_M\}$. Distributivity follows from the pointwise definitions:
    \begin{align*}
        A\otimes(B\union C)
        &\eqdef \st{a\times x}{x \in B\union C}\\
        &= \st{a\times b}{a \in A, b \in B} \union \st{a\times c}{a\in A, c \in C}\\
        &= (A\otimes B)\union (A\otimes C). 
    \end{align*}
    Completely analogously $(A\union B)\otimes C = (A\otimes C)\union (B\otimes C)$.

    The pointwise definition also implies that $\emptyset$ is multiplicatively absorbing: $\emptyset \otimes A = A\otimes \emptyset = 0$. This proves that $2^M$ is a \COS. It is immediate that $\top = M$ is a top element.

    Finally, if $(M, \times)$ is commutative, so is $(2^M, \otimes)$. Moreover, $(M, \times)$ is embedded in $(2^M, \otimes)$ as the collection of singletons: $a \mapsto \{a\}$, so that if $(M, \times)$ is not commutative, then neither is $(2^M, \otimes)$.
\end{proof}

The power-\COS has a non-trivial addition and is naturally suited to investigations of families from $\pset{M}$. For any finite commutative monoid $(M, \times)$ the power-\COS is in fact an \mCOS.

\begin{lem}\label{lem:powerdioid}
    Let $(M, \times)$ be a commutative monoid with top element ${\top_{\!M}}$. Then $2^M$ is an \mCOS with $\m = \{\top_{\!M}\}$.
\end{lem}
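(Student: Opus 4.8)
The plan is to lean entirely on the preceding Proposition, which already establishes that $2^M = (\pset{M}, \union, \otimes)$ is a commutative \COS whenever $(M, \times)$ is commutative. Consequently, the only thing left to verify is that the singleton $\{\top_M\}$ meets the two requirements of Definition \ref{def:mCOS}: that it is nonzero, and that it multiplicatively absorbs every nonzero element of $\pset{M}$.

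The nonzero requirement is immediate: since $0 = \emptyset$ in $2^M$ and $\top_M \in M$, we have $\{\top_M\} \neq \emptyset$, so $\m = \{\top_M\}$ is nonzero. For the absorbing property, I would first invoke the key fact recorded in the discussion of top elements above: because $\top_M$ is the top element of the monoid $(M, \times)$, it is the $\leq$-greatest element and is therefore absorbing for $\times$, i.e. $a \times \top_M = \top_M$ for every $a \in M$.

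With this in hand the verification is a one-line computation. For any nonzero $A \in \pset{M}$, that is, any nonempty $A$, the pointwise definition of $\otimes$ gives
\[A \otimes \{\top_M\} = \st{a \times \top_M}{a \in A} = \st{\top_M}{a \in A} = \{\top_M\},\]
where the middle equality uses that $\top_M$ absorbs in $(M, \times)$ and the final equality uses $A \neq \emptyset$. Hence $\m = \{\top_M\}$ is multiplicatively absorbing on all nonzero elements, and $2^M$ is an \mCOS.

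This argument presents no real obstacle; it is a direct unwinding of the definitions, and the heart of it is simply transporting the absorbing property of $\top_M$ in $M$ through the pointwise definition of $\otimes$. The only point that deserves any care is the role of the hypothesis $A \neq \emptyset$, which is precisely why Definition \ref{def:mCOS} restricts the absorbing condition to \emph{nonzero} $a$: the empty family yields $\emptyset \otimes \{\top_M\} = \emptyset = 0$, so $\{\top_M\}$ cannot (and should not) absorb $0$, consistent with $0$ being the multiplicatively absorbing element already forced by the semiring axioms.
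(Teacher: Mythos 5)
Your proof is correct and follows essentially the same route as the paper's: observe $\{\top_{\!M}\} \neq \emptyset = 0$, then use the pointwise definition of $\otimes$ together with the absorbing property of $\top_{\!M}$ in $(M, \times)$ to get $A \otimes \{\top_{\!M}\} = \{\top_{\!M}\}$ for nonempty $A$. The only cosmetic difference is your detour through ``$\leq$-greatest element''; in the paper's terminology the top element of $(M,\times)$ \emph{is} by definition the multiplicatively absorbing element, so that step can be cited directly.
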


\begin{proof}
    Note that $\m = \{\top_{\!M}\} \neq \emptyset$ so it is different from $0$. For any nonzero $A \in 2^M$ we have 
    \[A\otimes \m = \st{a\times \top_{\!M}}{a \in A} = \{\top_{\!M}\} = \m\]
    since $\top_{\!M}$ is absorbing in $(M, \times)$.
\end{proof}

\begin{eg}
    The commutative monoid $(\N, +)$ has no top element. The corresponding power-\COS $(\pset{\N}, \union, +)$ is not an \mCOS. Indeed, any nonempty $A \subseteq \N$ has some minimum element $a \in A$, but $\{a+1\}+A$ consists entirely of natural numbers strictly greater than $a$ so that $\{a+1\}+A \neq A$.
\end{eg}

Even in cases where $(M, \times)$ is a commutative monoid with no top element, as in the previous example, a restriction of the power-set construction still yields an \mCOS.

\begin{prop}
    Let $(M, \times)$ be a commutative monoid (with identity $1_M$). Define
    \[\psetr{M} \eqdef \st{A \subseteq M}{1_M \in A} \union \{\emptyset\}.\]
    Then the restriction of $2^M$ to $\psetr{M}$ is an \mCOS with $\top = \m = M$. We call the elements of $\psetr{M}$ \demph{rooted sets}, and the triple $(\psetr{M}, \union, \otimes)$ the \demph{rooted \mCOS} of $M$.
\end{prop}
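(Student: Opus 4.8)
The plan is to exhibit $\psetr{M}$ as a sub-\COS of the power-\COS $2^M$ that additionally carries a nonzero multiplicatively absorbing element, and then to identify that element as $M$ itself. Since the preceding proposition already establishes that $2^M$ is a commutative \COS, most of the semiring axioms come for free once closure is verified.

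First I would check that $\psetr{M}$ is closed under both operations and contains the constants $0 = \emptyset$ and $1 = \{1_M\}$. Closure under $\union$ is immediate: a union of rooted sets again contains $1_M$, and unions involving $\emptyset$ cause no trouble. Closure under $\otimes$ is where the \emph{rooted} condition first earns its keep---if $A, B$ both contain $1_M$ then $1_M = 1_M\times 1_M \in A\otimes B$, so the product is again rooted, while if either factor is $\emptyset$ then $A\otimes B = \emptyset$. Because associativity, commutativity, distributivity, and the multiplicative absorption of $0$ are universally-quantified identities inherited from $2^M$, this already shows $(\psetr{M}, \union, \otimes)$ is a commutative semiring. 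To see that it is a \COS I would simply note that $\union$ is idempotent, so $(\psetr{M}, \union)$ is an idempotent commutative monoid and hence canonically ordered; concretely its canonical order is $\subseteq$, with greatest element $\top = M$.

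The crux is the multiplicatively absorbing element, and I claim $\m = M$ works. It lies in $\psetr{M}$ since $1_M \in M$, and it is nonzero because $M \neq \emptyset$. For any nonzero $A \in \psetr{M}$---that is, any $A$ containing $1_M$---the product $A\otimes M = \st{a\times m}{a\in A,\ m\in M}$ contains $\st{1_M\times m}{m\in M} = M$, while it is trivially contained in $M$ because $M$ is closed under $\times$. Hence $A\otimes M = M = \m$, as required, and here the top and multiplicatively absorbing elements coincide.

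The single step carrying the real content---and the only one demanding care---is this last absorption identity, since it is precisely where the earlier $(\N, +)$ example broke down: the absence of a top element there meant no single set could absorb all products, whereas rooting every set at $1_M$ repairs this by forcing $M \subseteq A\otimes M$ irrespective of whether $(M, \times)$ possesses a top. I would therefore dispatch the closure and order checks briskly and reserve the emphasis for verifying $A\otimes M = M$.
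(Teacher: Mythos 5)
Your proof is correct and takes essentially the same approach as the paper: exhibit $\psetr{M}$ as a sub-\COS of $2^M$ and verify that $\m = M$ is absorbing via the two inclusions $M \subseteq A\otimes M$ (because $1_M \in A$) and $A\otimes M \subseteq M$. The paper compresses the closure checks to ``it is clear that $\psetr{M}$ forms a sub-dioid,'' which you simply spell out; the substance is identical.
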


\begin{proof}
    It is clear that $\psetr{M}$ forms a sub-\COS of $2^M$. For any nonzero element $A \in \psetr{M}$ we have $M\subseteq M\otimes A$ (since $1 \in A$), and since $X \subseteq M$ for any $X \in  \psetr{M}$ we conclude that $M\otimes A = M$.
\end{proof}

(In this example, the top element coincides with $\m$. A semiring element which is both additively-absorbing and multiplicative absorbing is called \demph{strongly infinite} by Golan \cite[p.~5]{Gol99}.)

\begin{eg}
    For the commutative monoid $(\N, +)$, we have $\psetr{M}$ is the (emptyset and the) collection of all subsets of $\N$ which have $0$ as an element.\footnote{Combinatorial questions regarding which rooted and unrooted subsets of $\{0, 1, 2, \ldots, k\}$ have the most divisors---with respect to the dioid product, i.e., the Minkowski sum---were previously investigated by the author \cite{Gro20}, proving some conjectures of Applegate, LeBrun, and Sloane \cite{ABS11}.}
\end{eg}

In addition to appearing as natural constructions, \mCOS{s} enjoy several nice properties as semirings. LaGrassa \cite{LaG95} noted that the semiring embedding of a multiplicative monoid is zerosumfree and entire. These properties hold in any \mCOS. We have already proved the zerosumfree property in Lemma \ref{lem:zerosum}, which holds in any canonically-ordered monoid. 

\begin{defi}{{\cite{Gol99}}}
    Let $(M, +, \times)$ be a semiring. A nonzero element $0 \neq z \in M$ is called a \demph{zero-divisor} if there exists some nonzero $0 \neq b \in M$ with $bz = 0$. The semiring $(M, +, \times)$ is called \demph{entire} just in case it has no zero-divisors.
\end{defi}

\begin{lem}
    Any \mCOS is entire.\footnote{Golan \cite[p.~154]{Gol99} calls a zerosumfree and entire semimodule (see Section \ref{sec:lin}) an \demph{information semimodule} and proves that a semiring is zerosumfree and entire if and only if it admits a nontrivial information semimodule. Golan references Takahashi's paper \cite{Tak85} for a classification of cyclic information semimodules over $\N$, though Takahashi does not refer to these semimodules by any specific name and I have not been able to find other references to ``information semimodules''.}
\end{lem}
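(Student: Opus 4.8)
The plan is to argue by contradiction, directly exploiting the defining property of the absorbing element $\m$ together with the semiring axiom that $0$ is multiplicatively absorbing. Let $(M, +, \times)$ be an \mCOS and suppose, toward a contradiction, that it has a zero-divisor; that is, there exist nonzero elements $0 \neq a, b \in M$ with $ab = 0$. The goal is to show this forces $\m = 0$, contradicting the requirement $\m \neq 0$ in Definition \ref{def:mCOS}.

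The key step is to evaluate the product $(ab)\m$ in two different ways. On the one hand, since $ab = 0$ and $0$ is multiplicatively absorbing (a semiring axiom), we have $(ab)\m = 0 \times \m = 0$. On the other hand, using associativity of $\times$ and the fact that $b \neq 0$, the absorbing property gives $b\m = \m$, and then, using $a \neq 0$, it gives $a\m = \m$; chaining these yields
\[
    (ab)\m = a(b\m) = a\m = \m.
\]
Comparing the two computations produces $\m = 0$, the desired contradiction. Hence no zero-divisors exist, and the \mCOS is entire.

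I do not expect any substantive obstacle here: the argument is a two-line computation, and the only point requiring care is that the identity $a\times \m = \m$ is asserted \emph{only} for nonzero factors, so one must verify that both $a$ and $b$ are nonzero before invoking it—which is exactly the hypothesis that $a, b$ form a zero-divisor pair. Commutativity of the \mCOS is not needed for this particular argument; associativity and the two absorbing properties (that of $0$ from the semiring axioms, and that of $\m$ from the definition of an \mCOS) suffice.
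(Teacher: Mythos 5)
Your proof is correct and is essentially the same as the paper's: both argue by contradiction, multiplying the zero product $ab = 0$ by $\m$ and using associativity plus the absorbing property of $\m$ (valid since both factors are nonzero) to force $\m = 0$. The paper merely writes the two evaluations as a single chain $0 = (bz)\times\m = b\times(z\times\m) = \m$, so there is no substantive difference.
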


\begin{proof}
    Let $(M, +, \times)$ be an \mCOS, and assume for contradiction that $0\neq z \in M$ is a zero-divisor. Then, there exists some $b \in M$ with $0 = bz$ so that
    \[0 = 0\times \m = (bz)\times \m = b\times(z\times\m) = b\times\m = \m\]
    a contradiction.
\end{proof}

The simplest possible semiring is the $2$-element \demph{Boolean semiring} $\B = \{0, 1\}$ which has only additive and multiplicative identities, and where $1+1 = 1$\footnote{Equivalently, $\B = (\{0, 1\}, \max, \min)$. Note that the $2$-element \demph{Boolean algebra} also has a complementation operation $\overline{1} = 0$ and $\overline{0} = 1$; in contrast, the 2-element \demph{Boolean ring} is $\F_2$ so that $1+1 = 0$.}. Semiring morphisms are defined in the expected manner; Golan \cite[p.~107]{Gol99} calls a semiring morphism $M \to \B$ a \demph{character} of $M$.

\begin{defi}[{\cite[ch.~9]{Gol99}}]
    Let $(M, +_M, \times_M)$ and $(N, +_N, \times_N)$ be semirings. A function $\phi: M \to N$ is called a \demph{morphism (of semirings)} if:
    \begin{enumerate}[label=(\roman*)]
        \item $\phi$ preserves $0$'s: $\phi(0_M) = 0_N$;
        \item $\phi$ preserves $1$'s: $\phi(1_M) = 1_N$;
        \item $\phi$ is additive: $\phi(a+_Mb) = \phi(a)+_N\phi(b)$, for every $a, b \in M$; 
        \item $\phi$ is multiplicative: $\phi(a\times_M b) = \phi(a)\times_N\phi(b)$, for every $a, b \in M$.
    \end{enumerate}
\end{defi}

We now define a Boolean morphism which we call the characteristic Boolean morphism, as a nod to both ``character'' and ``characteristic function.'' This morphism will play an important role in our investigation of linear independence in \mCOS.

\begin{lem}[{\cite[9.10]{Gol99}}]
\label{lem:boolmorph}
    Let $(M, +, \times)$ be a semiring and define $\vphi_M: M \to \B$ by
    \[\vphi_M(a) = \begin{cases}
        0 &\mbox{ if } a = 0,\\
        1 &\mbox{ if } a \neq 0.
    \end{cases}\]
    Then $\vphi_M$ is a morphisms of semirings if and only if $M$ is zerosumfree and entire; in such cases, we call $\vphi_M$ the \demph{characteristic Boolean morphism}.
\end{lem}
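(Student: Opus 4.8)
The plan is to prove both directions of the biconditional in Lemma \ref{lem:boolmorph} directly from the definition of semiring morphism. The map $\vphi_M$ automatically satisfies conditions (i) and (ii): by definition $\vphi_M(0) = 0$, and $\vphi_M(1) = 1$ because $1 \neq 0$ in any semiring (the degenerate case $0 = 1$ forces the trivial one-element semiring, which one should either exclude or handle separately, since there $\B$-valuedness collapses). So the entire content of the lemma lies in conditions (iii) and (iv), the additive and multiplicative compatibility, and the plan is to show that each of these two conditions is equivalent to one of the two hypotheses on $M$.

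First I would check the easy implication, that zerosumfree and entire together force $\vphi_M$ to be a morphism. For multiplicativity (iv), I compute $\vphi_M(ab)$ and $\vphi_M(a)\vphi_M(b)$ by cases. If either $a = 0$ or $b = 0$, then $ab = 0$ by the absorbing axiom \ref{ax:0}, and both sides evaluate to $0$. If both $a, b \neq 0$, then the right-hand side is $1\times 1 = 1$; the left-hand side equals $1$ precisely when $ab \neq 0$, and \emph{entireness} is exactly the statement that a product of two nonzero elements is nonzero. For additivity (iii), I again split into cases. If $a = b = 0$ both sides are $0$; if exactly one is zero, say $b = 0$, then $a + b = a$ and both sides give $\vphi_M(a)$. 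The only remaining case is $a, b \neq 0$, where the right-hand side is $1 + 1 = 1$ (recall $1+1=1$ in $\B$), so I need $\vphi_M(a+b) = 1$, i.e.\ $a + b \neq 0$; this is exactly the \emph{zerosumfree} property.

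For the converse, I would prove the contrapositive in each case: if $M$ fails to be zerosumfree, then additivity fails, and if $M$ fails to be entire, then multiplicativity fails. If $M$ is not zerosumfree, there exist nonzero $a, b$ with $a + b = 0$; then $\vphi_M(a+b) = \vphi_M(0) = 0$, whereas $\vphi_M(a) + \vphi_M(b) = 1 + 1 = 1$, so (iii) fails and $\vphi_M$ is not a morphism. Likewise, if $M$ is not entire, there exist nonzero $a, b$ with $ab = 0$; then $\vphi_M(ab) = 0$ but $\vphi_M(a)\vphi_M(b) = 1 \times 1 = 1$, so (iv) fails. Thus the failure of either hypothesis produces an explicit failure of the morphism conditions, completing the equivalence.

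I do not expect any genuine obstacle here — this is a case-check whose only subtlety is bookkeeping. The one point deserving care is the interplay in the cases $a, b \neq 0$: it is precisely because $\B$ collapses $1+1$ to $1$ that the additive case reduces cleanly to a nonvanishing condition, and this is what ties additivity to zerosumfreeness rather than to something stronger. The only edge case worth a line of comment is ensuring $1 \neq 0$ so that $\vphi_M(1) = 1$ as required for (ii); this holds in any nontrivial semiring, and I would note that the hypotheses (zerosumfree, entire) are anyway intended for such semirings.
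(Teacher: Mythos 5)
Your proof is correct, and it is precisely the routine verification the paper intends: the paper gives no proof of this lemma at all (it cites \cite[9.10]{Gol99} and declares that the lemmas of that section ``follow directly from the definition''), and your decomposition---additivity of $\vphi_M$ is equivalent to $M$ being zerosumfree, multiplicativity is equivalent to $M$ being entire---is exactly that direct check. Your flag on the degenerate case $0=1$ is also well taken, since the paper's own semiring definition does not explicitly require $1\neq 0$ (Golan's does), and condition (ii) genuinely fails for the one-element semiring, which is nevertheless zerosumfree and entire.
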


Since semirings do not have inverses, failure of injectivity $\phi(a) = \phi(b)$ does not necessarily imply $\phi(m) = 0$ for some nonzero $m \neq 0$. We therefore make the following definition (novel, to my knowledge).

\begin{defi}
    A semiring morphism $\phi: (M, +_M, \times_M) \to (N, +_N, \times_N)$ is called \demph{entire} if $(a\neq 0_M) \implies (\phi(a) \neq 0_N)$.
\end{defi}

The characteristic Boolean morphism is necessarily entire.

\section{Linear independence over \mCOS}\label{sec:lin}
\label{sec:LI}
Semimodules over semirings directly generalize modules over rings.

\begin{defi}[Semimodule]
    A \demph{semimodule} over a commutative semiring $(M, +, \times)$ is a triple $(V, +, \cdot)$ such that:
    \begin{enumerate}[label=(\roman*)]
        \item $(V, +)$ is a commutative monoid with identity $\vec{0}$.
        \item The scalar multiplication $\cdot$ is a function  $M\times V \to V$ such that:
        \begin{enumerate}[label=(\alph*)]
            \item scalar multiplication distributes over vector addition: for any $\lambda \in M$ and $\vec{u, v} \in V$,
            \[\lambda\cdot(\vec{u}+\vec{v}) = (\lambda\cdot\vec{u}) + (\lambda\cdot\vec{v}).\]
            \item semiring addition ``distributes'' over scalar multiplication: for any $\lambda, \mu \in M$ and $\vec{v} \in V$,
            \[(\lambda+\mu)\cdot\vec{v} = (\lambda\cdot\vec{v})+(\mu\cdot\vec{v}).\]
            \item scalar multiplication is compatible with semiring multiplication: for every $\lambda, \mu \in M$ and $\vec{v} \in V$,
            \[(\lambda \times \mu)\vec{v} = \lambda\cdot(\mu\cdot\vec{v})\]
            \item identity and zero for scalar multiplication: for any $\vec{v} \in V$,
            \begin{align*}
                &1\vec{v} = \vec{v},\\
                &0\vec{v} = \vec{0}.
            \end{align*}
            \item the zero vector is absorbing: for any $\lambda \in M$,
            \[\lambda\cdot\vec{0} = \vec{0}.\]
        \end{enumerate}
    \end{enumerate}
\end{defi}

(The last two conditions provably hold for any vector space, but must be included as axioms for semimodules.) We adopt the linear-algebra conventions of denoting elements of $M$ (\demph{scalars}) by Greek letters and elements of $V$ (\demph{vectors}) by bold letters. We also write $\lambda\vec{v}$ for the scalar multiplication $\lambda\cdot\vec{v}$.

\begin{eg}
For any commutative semiring $(M, +, \times)$ and any positive integer $n$, the triple $(M^n, +, \cdot)$ with operations defined componentwise is a semimodule over $(M, +, \times)$, as expected; we call this the \demph{standard $n$-semimodule} over $M$. In the case that $M$ is a dioid, the result is furthermore a moduloid.
\end{eg}

\begin{defi}[{\cite[p.~174]{GM08}}]
    A \demph{moduloid} (also called \demph{naturally ordered semimodule} or \demph{canonically ordered semimodule}) is a semimodule $(V, +, \cdot)$ over a \COS $(M, +, \times)$ such that the commutative monoid $(V, +)$ is canonically-ordered.
\end{defi}

Notions of linear independence which are equivalent for a vector space over a field become distinct when scalars no longer have additive or multiplicative inverses. Generalizations of linear independence of varying strengths have been investigated in the literature, one of the earliest (\demph{weak linear independence}) is due to Cuninghame-Green \cite{CG79} (see \cite[p.~178]{GM08} and \cite[p.~191]{Gol99} for brief overviews and references for different notions of linear independence); most recently a new concept of \demph{tropical independence} has been proposed and studied, we refer the reader to the discussion in \cite{AGG09} for this (and related) concept of linear independence in tropical semirings. In this section we focus on an especially useful generalization, introduced by Gondran and Minoux \cite{GM77,GM78,GM84} who give it the name ``linear independence''. We follow Golan's \cite[p.~191]{Gol99} more agnostic terminology and call this concept ``linear attachment''.

Many arguments for semimodules over a \COS proceed by somehow separating the ``positive'' and ``negative'' parts. For example, one of the definitions of linear dependence over a vector space is that $\vec{0}$ can be expressed as a nontrivial linear combination of the vectors. 
\[\lambda_1\vec{v}_1 + \lambda_2\vec{v}_2+\cdots+\lambda_n\vec{v}_n = \vec{0}.\]
By rearranging such an equation we may write
\[\lambda_1\vec{v}_1+\cdots+\lambda_k\vec{v}_k = (-\lambda_{k+1})\vec{v}_{k+1} + \cdots + (-\lambda_n)\vec{v}_n.\]

Such a rearrangement is not possible over an arbitrary semiring, where additive inverses may not exist. Instead, this property is captured in the definition of linear attachment.

\begin{defi}[{\cite[p.~177]{GM08}}]
\label{def:linatt}
    Let $(V, +, \cdot)$ be a semimodule over a commutative semiring $(M, +, \times)$. A collection of vectors $\{\vec{v}_i\}_{i \in I} \subseteq V$ is said to be \demph{(linearly) attached} (negative: \demph{(linearly) detached}) if there exist disjoint subsets $A, B \subseteq I$, at most one of which is $\emptyset$, and nonzero scalars $(\lambda_{a})_{a \in A}, (\mu_{b})_{b \in B}$ such that
    \[\sum_{a \in A}\lambda_{a}\vec{v}_a = \sum_{b\in B}\mu_{b}\vec{v}_b.\]
    (The empty-sum is interpreted as the zero vector.)
\end{defi}

Our argument relies on the fact that \emph{semilinear} maps often preserve linear attachment. These maps can be defined analogously to the linear algebra case, but to my knowledge have never been studied in the context of semimodules.

\begin{defi}
    Let $\phi: (M, +, \times) \to (N, +, \times)$ be a morphism of semirings, and $(U, +, \cdot), (V, +, \cdot)$ semimodules over $M, N$ respectively. A function $T: M \to N$ is called \demph{$\phi$-semilinear} if it is:
    \begin{enumerate}[label=(\roman*)]
        \item Additive: for any $\vec{u}_1, \vec{u}_2 \in U$,
        \[T(\vec{u}_1+\vec{u}_2) = T(\vec{u}_1)+T(\vec{u}_2).\]
        \item $\phi$-homogeneous: for any $\vec{u} \in U$ and any $\lambda \in M$,
        \[T(\lambda\vec{u}) = \phi(\lambda) T(\vec{u}).\]
    \end{enumerate}
\end{defi}

Note that $\phi$-homogeneity implies $T(\vec{0}) = \vec{0}$. When $M = N$ and $\phi$ is the identity morphism, $\phi$-semilinear maps are the analogues of linear maps, they are called \demph{morphisms of semi-modules} \cite[p.~175]{GM08} or \demph{$M$-homomorphism} \cite[p.~156]{Gol99}.

\begin{eg}
    Given any semiring morphism $\phi: (M, +, \times) \to (N, +, \times)$, we obtain an induced $\phi$-semilinear map between the corresponding standard semimodules $T_\phi: M^k \to N^k$ defined componentwise $T_\phi(v_1, v_2, \ldots, v_k) = (\phi(v_1), \phi(v_2), \ldots, \phi(v_k))$.
\end{eg}

\begin{lem}
\label{lem:forward}
    Let $\phi: (M, +, \times) \to (N, +, \times)$ be an entire semiring morphism, and $T_\phi: M^k \to N^k$ the induced semilinear map. The collection $\{\vec{v}_i\}_{i \in I} \subseteq M^k$ is linearly attached, then so is $\{T_{\phi}(\vec{v}_i)\}_{i \in I} \subseteq N^k$.
\end{lem}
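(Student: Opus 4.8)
The plan is to push the witnessing linear attachment of $\{\vec{v}_i\}_{i \in I}$ forward through $T_\phi$ and verify that the result is a valid linear attachment of the images. By Definition \ref{def:linatt}, linear attachment of $\{\vec{v}_i\}_{i \in I}$ gives disjoint subsets $A, B \subseteq I$, not both empty, together with nonzero scalars $(\lambda_a)_{a \in A}$ and $(\mu_b)_{b \in B}$ in $M$ such that
\[\sum_{a \in A}\lambda_a \vec{v}_a = \sum_{b \in B}\mu_b \vec{v}_b.\]
First I would apply $T_\phi$ to both sides of this equation. Since $T_\phi$ is $\phi$-semilinear, it is additive and $\phi$-homogeneous, so it commutes with finite sums and pulls scalars out through $\phi$. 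Applying additivity repeatedly and then $\phi$-homogeneity term-by-term yields
\[\sum_{a \in A}\phi(\lambda_a)\, T_\phi(\vec{v}_a) = \sum_{b \in B}\phi(\mu_b)\, T_\phi(\vec{v}_b).\]
This is already an equation of the required shape over the same index sets $A, B$, applied to the image vectors $T_\phi(\vec{v}_i)$.

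It then remains to check that this equation genuinely witnesses linear attachment of $\{T_\phi(\vec{v}_i)\}_{i \in I}$ according to the definition, i.e.\ that the data satisfy all the stated conditions. The index sets $A$ and $B$ are unchanged, so they remain disjoint and not both empty. The one substantive point—and the step I expect to be the only real obstacle—is that the new scalars $\phi(\lambda_a)$ and $\phi(\mu_b)$ must all be \emph{nonzero} in $N$, since Definition \ref{def:linatt} requires nonzero scalars. This is exactly where the hypothesis that $\phi$ is \emph{entire} is used: entirety says precisely that $a \neq 0_M$ implies $\phi(a) \neq 0_N$. Since each $\lambda_a$ and each $\mu_b$ is nonzero by hypothesis, entirety guarantees $\phi(\lambda_a) \neq 0_N$ and $\phi(\mu_b) \neq 0_N$ for all $a \in A$, $b \in B$.

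With the nonzero condition secured, the displayed equation together with the index sets $A, B$ and scalars $(\phi(\lambda_a))_{a \in A}, (\phi(\mu_b))_{b \in B}$ meets every clause of Definition \ref{def:linatt}, so $\{T_\phi(\vec{v}_i)\}_{i \in I}$ is linearly attached, completing the argument. The proof is essentially a formal verification: the interesting content is localized entirely in the role of entirety, which is what rules out the degenerate possibility that pushing forward collapses a nonzero coefficient to zero and thereby trivializes the attachment relation. I would take care to note that without entirety the statement can fail, since a morphism sending some nonzero $\lambda_a$ to $0_N$ could destroy the nontriviality of the relation on the image side.
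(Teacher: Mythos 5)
Your proposal is correct and follows essentially the same approach as the paper's own proof: apply $T_\phi$ to the witnessing equation, use additivity and $\phi$-homogeneity to push the scalars through $\phi$, and invoke entirety to keep the coefficients nonzero. Your explicit remark that entirety is the only substantive point (and that the lemma can fail without it) is a worthwhile observation the paper leaves implicit.
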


\begin{proof}
    Suppose the collection $\{\vec{v}_i\}_{i \in I} \subseteq M^k$ is linearly attached, so there exist disjoint subsets $A, B \subseteq I$ (not both empty) and nonzero scalars $\{\lambda_a\}_{a \in A}, \{\lambda_{b}\}_{b \in B}$ such that 
    \[\sum_{a \in A}\lambda_{a}\vec{v}_a = \sum_{b \in B}\lambda_{b\in B}\vec{v}_b.\]
    Applying $T_\phi$ to both sides of the equation we obtain
    \[\sum_{a \in A}\phi(\lambda_{a})T_{\phi}(\vec{v}_a) = \sum_{b\in B}\phi(\lambda_b)T_{\phi}(\vec{v}_b).\]
    Since $\phi$ is entire, $\{\phi(\lambda_{a})\}_{a \in A}$ and $\{\phi(\lambda_b)\}_{b \in B}$ are nonzero scalars, proving that $\{T_{\phi}(\vec{v}_i)\}_{i \in I}$ is linearly attached.
\end{proof}

In particular, for $M$ an \mCOS and $N = \B$, we have that the characteristic Boolean morphism $\vphi: M\to \B$ induces a semilinear map $T_{\vphi}: M^k \to \B^k$ which sends linearly attached collections in $M$ to linearly attached collections in $\B$. In this particular case, the converse also holds.

\begin{thm}
\label{thm:bool}
    Let $(M, +, \times)$ be an \mCOS, $\vphi: M \to \B$ the characteristic Boolean morphism, and $T: M^k \to \B^k$ the induced $\vphi$-semilinear map. The collection $\{\vec{v}_i\}_{i \in I} \subseteq M^k$ is linearly attached if and only if $\{T(\vec{v}_i)\}_{i \in I} \subseteq \B^k$ is linearly attached.
\end{thm}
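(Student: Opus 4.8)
The plan is to dispose of the forward implication immediately and concentrate all the real work on the converse. Since $\vphi$ is the characteristic Boolean morphism it is entire, so Lemma~\ref{lem:forward} already shows that linear attachment of $\{\vec{v}_i\}$ in $M^k$ forces linear attachment of $\{T(\vec{v}_i)\}$ in $\B^k$. It therefore remains only to prove the converse, and this is where I would spend the effort.

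First I would unwind what linear attachment means on the Boolean side. Writing $S_i \eqdef \{j : (\vec{v}_i)_j \neq 0\}$ for the support of $\vec{v}_i$, the vector $T(\vec{v}_i) \in \B^k$ is precisely the indicator of $S_i$. Since the only nonzero scalar of $\B$ is $1$ and Boolean addition is componentwise $\max$, an attachment of $\{T(\vec{v}_i)\}$ amounts to disjoint subsets $A, B \subseteq I$ (not both empty) with $\bigcup_{a \in A} S_a = \bigcup_{b \in B} S_b$.

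The key step is to lift this equality of unions back to $M^k$ by scaling every vector with the multiplicatively absorbing element $\m$, reusing the very same index sets $A$ and $B$. The crucial arithmetic fact I would isolate first is that $n \cdot \m = \m$ for every integer $n \geq 1$: indeed $1 \neq 0$, so $n \cdot 1 \neq 0$ by the zerosumfree property (Lemma~\ref{lem:zerosum}), whence $n \cdot \m = \m \times (n \cdot 1) = \m$ by absorption. Consequently, in each coordinate $j$ the sum $\sum_{a \in A} \m \vec{v}_a$ evaluates to $\m$ whenever $j \in \bigcup_{a \in A} S_a$ and to $0$ otherwise (because $\m \times 0 = 0$ while $\m \times a = \m$ for nonzero $a$), and symmetrically for $B$. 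Since the two unions coincide, the two coordinatewise patterns of $\m$'s and $0$'s agree, giving $\sum_{a \in A} \m \vec{v}_a = \sum_{b \in B} \m \vec{v}_b$ with nonzero scalars $\m$, i.e. the desired linear attachment in $M^k$.

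The main conceptual obstacle is recognizing that $\m$ \emph{collapses multiplicities}: the absorbing element turns the potentially complicated additive behavior of $M$ into the idempotent OR-behavior of $\B$, which is exactly what allows a Boolean attachment to be pulled back with no control whatsoever over the individual entries of the $\vec{v}_i$. I would finish by checking the degenerate case where one of $A, B$ is empty, so that one union is empty and the corresponding vectors all vanish; there both sides reduce to $\vec{0}$ (using $\lambda \cdot \vec{0} = \vec{0}$ against the empty sum), and the attachment still holds with the nonempty side carrying the scalars $\m$.
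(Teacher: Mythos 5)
Your proof is correct and takes essentially the same route as the paper: the forward direction is delegated to Lemma~\ref{lem:forward}, and the converse is obtained by reusing the Boolean index sets $A, B$ and multiplying through by $\m$, so that only the zero/nonzero pattern of coordinates matters. Your explicit verification that $n\cdot\m = \m$ for $n \geq 1$ (via zerosumfreeness and absorption) spells out a detail the paper leaves implicit, but the underlying idea is identical.
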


\begin{proof}
    The ``only if'' direction follows from Lemma \ref{lem:forward}, so it remains to prove the ``if'' direction. Let $\{\vec{v}_i\}_{i \in I} \subseteq M^k$ be a collection of vectors such that $\{T(\vec{v}_i)\}_{i \in I} \subseteq \B^k$ is linearly attached. Then there exist disjoint subset of indices $A, B \subseteq I$ such that
    \[\sum_{a \in A}T(\vec{v}_a) = \sum_{b\in B}T(\vec{v}_b)\]
    since the only nonzero scalar of $\B$ is $1$. This is an equality in $\B^k$, so it is simply an assertion about the zero and nonzero components matching up. In detail (using the definition of $T$): for each $1 \leq i \leq k$, there exists some $a\in A$ such that the $i$-th component of $\vec{v}_a$ is nonzero if and only if there exists some $b \in B$ such that the $i$-th component of $\vec{v}_b$ is nonzero. Therefore,
    \[\sum_{a\in A}\m\vec{v}_a = \sum_{b\in B}\m\vec{v}_b\]
    so that $\{\vec{v}_i\}_{i \in I}$ is linearly attached.
\end{proof}

\begin{rk}
    The dioid $\N$ is not an \mCOS, but since it embeds into the field $\Q$, the same proof works: instead of multiplying by $\m$, we multiply by the common denominator to transfer a linear attachment in $\Q^k$ to a linear attachment in $\N^k$.
\end{rk}

We see that for \mCOS{s}, questions of linear attachment and detachment are completely equivalent to the corresponding questions for the much simpler semiring $\B$. Linear attachment in $\B$ can sometimes be detected by linear dependence over $\R$.

\begin{thm}
\label{thm:toR}
    If the collection $\{\vec{v}_i\}_{i \in I} \subseteq \B^k$ is linearly dependent when viewed as vectors in $\R^k$, then it is linearly attached in $\B^k$.
\end{thm}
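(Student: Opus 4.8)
The plan is to convert a real linear dependence into a Boolean linear attachment by splitting the real coefficients into their positive and negative parts, and then exploiting the absence of cancellation among nonnegative $0$--$1$ vectors. Recall that in $\B$ the only nonzero scalar is $1$ and addition is the Boolean ``or'', so a linear attachment in $\B^k$ amounts to a pair of disjoint, not-both-empty index sets $A, B \subseteq I$ for which the union of the supports of $\{\vec{v}_a\}_{a \in A}$ equals the union of the supports of $\{\vec{v}_b\}_{b \in B}$. This reformulation is what I would target.

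First I would invoke the hypothesis: viewing the $\vec{v}_i$ as $0$--$1$ vectors in $\R^k$, there are reals $(c_i)_{i \in I}$, not all zero, with $\sum_{i \in I} c_i \vec{v}_i = \vec{0}$ in $\R^k$. Set $A = \{i : c_i > 0\}$ and $B = \{i : c_i < 0\}$; these are disjoint, and since the dependence is nontrivial at least one of them is nonempty. Moving the negative terms to the other side rewrites the dependence as
\[\sum_{a \in A} c_a \vec{v}_a = \sum_{b \in B} (-c_b)\vec{v}_b,\]
an equality in $\R^k$ in which every coefficient $c_a$ and every $-c_b$ is strictly positive.

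The crux of the argument is a no-cancellation observation. Because each $\vec{v}_i$ has nonnegative entries and each coefficient on the left is strictly positive, the $i$-th entry of $\sum_{a \in A} c_a \vec{v}_a$ is a sum of nonnegative reals, hence it is strictly positive if and only if $(\vec{v}_a)_i = 1$ for some $a \in A$; that is, the support of the left-hand vector is exactly $\bigcup_{a \in A} \operatorname{supp}(\vec{v}_a)$. The identical reasoning applies to the right-hand side. Since the two real vectors are equal, their supports coincide, giving $\bigcup_{a \in A} \operatorname{supp}(\vec{v}_a) = \bigcup_{b \in B} \operatorname{supp}(\vec{v}_b)$. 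Translating back, this is precisely the equation $\sum_{a \in A}\vec{v}_a = \sum_{b \in B}\vec{v}_b$ in $\B^k$, which (with all scalars equal to $1$) exhibits the collection as linearly attached.

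I expect the genuine difficulty to lie not in any deep step but in the bookkeeping at the boundary cases. I would check that the attachment datum remains legitimate when one of $A, B$ is empty: if, say, $B = \emptyset$, then all nonzero $c_i$ are positive and the left-hand sum vanishes in $\R^k$, forcing each such $\vec{v}_a = \vec{0}$, so both sides have empty support and the Boolean equation $\sum_{a \in A}\vec{v}_a = \vec{0}$ still holds with $B$ empty---which is permitted since Definition \ref{def:linatt} allows at most one of $A, B$ to be empty. The nontriviality of the real dependence guarantees they are not both empty, so all requirements of linear attachment are met.
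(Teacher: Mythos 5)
Your proposal is correct and follows essentially the same route as the paper's proof: split the real coefficients by sign, move the negative terms across the equality, and use the fact that positive combinations of $0$--$1$ vectors cannot cancel, so the supports on the two sides must coincide, yielding the Boolean attachment $\sum_{a \in A}\vec{v}_a = \sum_{b \in B}\vec{v}_b$. The only cosmetic difference is bookkeeping of the degenerate case: the paper assumes up front (WLOG) that $\vec{0}$ is not in the collection, which forces both $A$ and $B$ to be nonempty, whereas you allow one of them to be empty and verify directly that the attachment datum is still legitimate---both treatments are valid.
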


\begin{proof}
    We may assume without loss of generality that $\vec{0}$ is not in the collection (for otherwise it is trivially linearly attached). To avoid confusion, we use $\{\vec{v}_i\}_{i \in I}$ for the collection of vector in $\B^k$ and $\{\widetilde{\vec{v}}_i\}_{i \in I}$ for the same collection when viewed as vectors in $\R^k$.

    Suppose $\{\widetilde{\vec{v}}_i\}_{i \in I}$ is linearly dependent, so there is a finite subset $J \subseteq I$ and nonzero real scalars $\{\lambda_j\}_{j \in J}\subseteq \R$ such that
    \[\sum_{j \in J}\lambda_j\widetilde{\vec{v}}_j = \vec{0}.\]
    Let $A, B \subseteq J$ be the collections of indices for which the scalars are positive or negative, respectively:
    \begin{align*}
        &A = \st{j \in J}{\lambda_j > 0}, &&B = \st{j \in J}{\lambda_j<0}.
    \end{align*}
    Note that $\{A, B\}$ is a partition of $J$: $A\union B = J$ (since the $\lambda_j$ are nonzero), $A \cap B = \emptyset$, and $A, B \neq \emptyset$ (because the components of the $\widetilde{\vec{v}}_i$ are all nonnegative, in fact in $\{0, 1\}$). We may therefore rewrite the linear dependence relation as
    \[\sum_{a \in A}\lambda_a\widetilde{\vec{v}}_a = \sum_{b \in B}(-\lambda_b)\widetilde{\vec{v}}_b.\]
    Every vector appearing in this equality has positive components. In particular, for each $1 \leq i \leq k$, there exists some $a\in A$ such that the $i$-th component of $\vec{v}_a$ is nonzero if and only if there exists some $b \in B$ such that the $i$-th component of $\vec{v}_b$ is nonzero. Therefore,
    \[\sum_{a\in A}\vec{v}_a = \sum_{b\in B}\vec{v}_b.\]
    proving that $\{\vec{v}_i\}_{i \in I}$ is linearly attached.
\end{proof}

\begin{eg}
\label{eg:noconv}
    The vectors
    \[\begin{mx}
        1 \\ 0 \\ 1 \\ 0
    \end{mx}\stp;\stp \begin{mx}
        0 \\ 1 \\ 0 \\ 1
    \end{mx}\stp;\stp \begin{mx}
        1 \\ 1 \\ 1 \\ 0
    \end{mx}\stp;\stp \begin{mx}
        0 \\ 1 \\ 1 \\ 1
    \end{mx}\]
    are clearly linearly attached in $\B^4$ (e.g., the sum of the first two is the same as the sum of the last two). However, they are linearly independent over $\R$:
    \[\det\begin{mx}
        1 & 0 & 1 & 0 \\
        0 & 1 & 1 & 1 \\
        1 & 0 & 1 & 1 \\
        0 & 1 & 0 & 1
    \end{mx} = 1.\]
    This shows that the converse to Theorem \ref{thm:toR} fails.
\end{eg}

\begin{crl}
\label{crl:max}
    Let $(M, +, \times)$ be an \mCOS. Any collection of $k+1$ (or more) vectors in $M^k$ is linearly attached.
\end{crl}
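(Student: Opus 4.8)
The plan is to reduce the statement to the two transfer theorems just established, Theorem \ref{thm:bool} and Theorem \ref{thm:toR}, so that the only genuinely new input is the elementary fact that any $k+1$ vectors in the $k$-dimensional real vector space $\R^k$ are linearly dependent. First I would dispense with the ``or more'' clause: if some $(k+1)$-element subcollection indexed by $I' \subseteq I$ is linearly attached, witnessed by disjoint $A, B \subseteq I'$ (not both empty), then the very same $A, B$, now viewed as subsets of the larger index set $I$, witness that the whole collection $\{\vec{v}_i\}_{i \in I}$ is linearly attached. Hence it suffices to treat a collection with $\crd{I} = k+1$ exactly.

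Next I would push the problem down to $\B$. Since every \mCOS is zerosumfree (Lemma \ref{lem:zerosum}) and entire, Lemma \ref{lem:boolmorph} supplies the characteristic Boolean morphism $\vphi: M \to \B$; let $T: M^k \to \B^k$ be the induced $\vphi$-semilinear map. By Theorem \ref{thm:bool}, the collection $\{\vec{v}_i\}_{i \in I}$ is linearly attached in $M^k$ if and only if $\{T(\vec{v}_i)\}_{i \in I}$ is linearly attached in $\B^k$, so it is enough to establish the latter. I would then lift the $k+1$ Boolean vectors $T(\vec{v}_i)$ to $\R^k$, regarding $0, 1 \in \B$ as the reals $0, 1$. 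Because there are $k+1$ of them in a space of dimension $\dim_{\R}\R^k = k < k+1$, they are linearly dependent over $\R$; by Theorem \ref{thm:toR} this forces $\{T(\vec{v}_i)\}_{i \in I}$ to be linearly attached in $\B^k$, which is exactly what remained. Chaining the three steps gives the corollary.

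There is essentially no obstacle here: the substantive content lives in Theorems \ref{thm:bool} and \ref{thm:toR}, and this corollary merely feeds them the pigeonhole fact about real dimension. The only point requiring a moment's care is the bookkeeping in the first paragraph, together with checking that repeated or zero images $T(\vec{v}_i)$ cause no trouble; they do not, since linear dependence of $k+1$ vectors in $\R^k$ holds irrespective of repetition, and the proof of Theorem \ref{thm:toR} already accommodates the zero vector at the outset.
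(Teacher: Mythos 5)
Your proof is correct and follows exactly the paper's own argument: reduce to $\B^k$ via the characteristic Boolean morphism and Theorem \ref{thm:bool}, invoke real linear dependence of $k+1$ vectors in $\R^k$, and conclude attachment via Theorem \ref{thm:toR}. The only additions (handling the ``or more'' clause and zero/repeated vectors explicitly) are fine but not substantively different from what the paper leaves implicit.
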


\begin{proof}
    Let $\vec{v}_1, \ldots, \vec{v}_{k+1} \in M^{k+1}$ be a collection of $k+1$ vectors. Let $T: M^k \to \B^k$ be the $\vphi$-semilinear map induced by the characteristic Boolean morphism $\vphi: M \to \B$. By Theorem \ref{thm:bool}, $\vec{v}_1, \ldots, \vec{v}_{k+1}$ are linearly attached if and only if $T(\vec{v}_1), \ldots, T(\vec{v}_{k+1})$ are linearly attached. Viewed as $k+1$ vectors in $\R^k$, we know that $T(\vec{v}_1), \ldots, T(\vec{v}_{k+1})$ must be linearly dependent, so by Theorem \ref{thm:toR}, $T(\vec{v}_1), \ldots, T(\vec{v}_{k+1})$ are linearly attached.
\end{proof}

This solves Puzzle \ref{puz:howlarge}: the maximal size of a family $\ffam \subseteq \pset{\{1, 2, \ldots, n\}}$ with the disparate union property is $n$.

\begin{crl}[Solution to Puzzle \ref{puz:howlarge}]
\label{crl:solhowlarge}
    Let $\ffam \subseteq \pset{\{1, 2, \ldots, n\}}$ be a family with $\crd{\ffam}\geq n+1$. Then there exist two disjoint subfamilies $\mcal{A}, \mcal{B} \subseteq \ffam$ (not both empty) such that $\Union\mcal{A} = \Union \mcal{B}$. 
\end{crl}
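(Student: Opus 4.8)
The plan is to realize this statement as a special case of Corollary~\ref{crl:max}, by choosing the simplest possible \mCOS---the Boolean semiring $\B$---and encoding each set of $\ffam$ by its incidence vector. First I would verify that $\B$ really is an \mCOS: its only nonzero element is $1$, and since $1$ is the multiplicative identity we trivially have $1\times 1 = 1$, so $\m = 1$ serves as a (indeed the) nonzero multiplicatively absorbing element. Hence $\B^n$ is a standard semimodule over an \mCOS, and we apply Corollary~\ref{crl:max} with $k = n$.

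Next I would set up the dictionary between subfamilies and vector sums. To each $S \in \ffam$ associate its incidence vector $\vec{v}_S \in \B^n$. The crucial observation is that because addition in $\B$ is idempotent ($1+1 = 1$), for any subfamily $\mcal{A} \subseteq \ffam$ the sum $\sum_{S \in \mcal{A}}\vec{v}_S$ is exactly the incidence vector of $\Union\mcal{A}$: its $i$-th coordinate is nonzero precisely when $i$ belongs to some member of $\mcal{A}$. Since distinct subsets of $\{1,\ldots,n\}$ have distinct incidence vectors, equality of two such sums is equivalent to equality of the corresponding unions. This is exactly the idempotency phenomenon flagged in Section~\ref{sec:motivation} as the reason fields fail and \mCOS{s} succeed.

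With the dictionary in place the argument is immediate. Since $\crd{\ffam}\geq n+1$, the incidence vectors $\{\vec{v}_S\}_{S \in \ffam}$ form a collection of at least $n+1 = k+1$ vectors in $\B^n$, so Corollary~\ref{crl:max} declares them linearly attached. Unpacking Definition~\ref{def:linatt}, there are disjoint subfamilies $\mcal{A}, \mcal{B} \subseteq \ffam$ (at most one empty) and nonzero scalars with $\sum_{S\in\mcal{A}}\lambda_S\vec{v}_S = \sum_{T\in\mcal{B}}\mu_T\vec{v}_T$; but the only nonzero scalar of $\B$ is $1$, so this collapses to $\sum_{S\in\mcal{A}}\vec{v}_S = \sum_{T\in\mcal{B}}\vec{v}_T$. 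By the dictionary this asserts that the incidence vector of $\Union\mcal{A}$ equals that of $\Union\mcal{B}$, whence $\Union\mcal{A} = \Union\mcal{B}$, which is the desired conclusion.

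I do not expect a genuine obstacle, since all the substantive content is already carried by Corollary~\ref{crl:max} (and upstream by Theorems~\ref{thm:bool} and~\ref{thm:toR}); the only points demanding a little care are the slightly degenerate verification that $\m$ coincides with $1$ in $\B$, and the bookkeeping that disjoint index subsets of $I = \ffam$ correspond to honestly disjoint subfamilies of sets, with the ``at most one empty'' clause of linear attachment aligning precisely with the ``not both empty'' clause of the disparate union property.
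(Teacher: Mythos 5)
Your proposal is correct and matches the paper's own proof essentially step for step: both apply Corollary~\ref{crl:max} to the incidence vectors in $\B^n$, note that the only nonzero scalar in $\B$ is $1$, and use the idempotency of Boolean addition to identify sums of incidence vectors with unions. Your explicit verification that $\B$ is an \mCOS{} (with $\m = 1$) is a detail the paper leaves implicit, but it changes nothing substantive.
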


\begin{proof}
    To each set in $\ffam = \{S_1, S_2, \ldots, S_{k}\}$ assign its incidence vector in $\B^n$: $\vec{v}_1, \vec{v}_2, \ldots, \vec{v}_k$. Since $\crd{\ffam} = k \geq n+1$, Corollary \ref{crl:max} implies that these vectors are linearly attached, so there exist disjoint subsets of indices $A, B \subseteq \{1, 2, \ldots, k\}$ (not both empty) such that 
    \[\sum_{a \in A}\vec{v}_a = \sum_{b \in B}\vec{v}_b\]
    (the only nonzero scalar in $\B$ is $1$). Addition of incidence vectors in $\B$ exactly corresponds to union of the sets they represent.
\end{proof}

The very same reasoning also presents a partial solution to Puzzle \ref{puz:criterion}.

\begin{crl}[Partial Solution to Puzzle \ref{puz:criterion}]
\label{crl:part}
    Let $\ffam \subseteq \pset{\{1, 2, \ldots, n\}}$ be a family of size $n$, $\crd{\ffam} = n$. If the matrix of incidence vectors has determinant $0$ as a real matrix, then there exist two disjoint subfamilies $\mcal{A}, \mcal{B} \subseteq \ffam$ (not both empty) such that $\Union\mcal{A} = \Union \mcal{B}$.
\end{crl}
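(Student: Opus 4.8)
The plan is to mirror the argument of Corollary \ref{crl:solhowlarge}, replacing the pigeonhole-style appeal to ``$k+1$ vectors in $\R^k$ are dependent'' with the determinant criterion available in the square case. Assign to each set in $\ffam = \{S_1, \ldots, S_n\}$ its incidence vector $\vec{v}_i \in \B^n$, and regard these as the columns (equivalently, rows---the determinant is unchanged) of an $n \times n$ matrix. Since $\crd{\ffam} = n$, this matrix is square, and the hypothesis that its real determinant vanishes is, by the standard equivalence from linear algebra over $\R$, exactly the statement that $\vec{v}_1, \ldots, \vec{v}_n$ are linearly dependent when viewed in $\R^n$.

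First I would invoke Theorem \ref{thm:toR}: a collection of $\B$-vectors that is linearly dependent over $\R$ is linearly attached in $\B^n$. Applying this to $\vec{v}_1, \ldots, \vec{v}_n$ yields disjoint index sets $A, B \subseteq \{1, \ldots, n\}$, not both empty, and---since the only nonzero scalar in $\B$ is $1$---the relation $\sum_{a \in A}\vec{v}_a = \sum_{b \in B}\vec{v}_b$ in $\B^n$.

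Finally I would translate this $\B$-linear attachment back into the language of unions, exactly as in Corollary \ref{crl:solhowlarge}: addition of incidence vectors in $\B^n$ records precisely which coordinates are nonzero, so the equality $\sum_{a \in A}\vec{v}_a = \sum_{b \in B}\vec{v}_b$ asserts that $\bigcup_{a \in A}S_a$ and $\bigcup_{b \in B}S_b$ have the same incidence vector, hence coincide. Setting $\mcal{A} = \st{S_a}{a \in A}$ and $\mcal{B} = \st{S_b}{b \in B}$ produces the desired disjoint subfamilies with $\Union\mcal{A} = \Union\mcal{B}$.

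I expect no genuine obstacle here: the statement is an immediate specialization of Theorem \ref{thm:toR} to the square case, the only content beyond that theorem being the observation that a vanishing determinant is equivalent to $\R$-linear dependence of the columns. The reason the solution is merely \emph{partial} is that the converse fails---as Example \ref{eg:noconv} demonstrates, a \emph{nonzero} determinant does not preclude linear attachment in $\B^n$---so the determinant test can certify that the disparate union property is \emph{violated} but cannot certify that it holds. Closing this gap is presumably what the bideterminant of Section \ref{sec:bid} is designed to accomplish.
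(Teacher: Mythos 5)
Your proposal is correct and is exactly the argument the paper intends: the corollary is stated as following by ``the very same reasoning'' as Corollary \ref{crl:solhowlarge}, namely passing from the vanishing real determinant to $\R$-linear dependence, applying Theorem \ref{thm:toR} to get linear attachment in $\B^n$, and translating $\B$-addition of incidence vectors back into unions. Your closing remark about why the solution is only partial (Example \ref{eg:noconv}) also matches the paper's own discussion.
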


Example \ref{eg:noconv} shows that this condition is sufficient but not necessary. In Section \ref{sec:bid} we present a necessary and sufficient criterion in terms of the bideterminant (rather than the determinant) of the matrix of incidence vectors. We conclude this section with a partial converse to Theorem \ref{thm:toR} which will play an important role in our investigation of the bideterminant and our solution to Puzzle \ref{puz:criterion}: we (constructively) prove that linearly attached Boolean vectors in $\B^k$ arise as the image of linearly attached vectors in $\N^k$. Linearly attached vectors in $\N^k$ are easily seen to be linearly dependent when regarded as vectors in $\R^k$. Note that $\N$ is an \COS which is not an \mCOS; nevertheless, since it is zerosumfree and entire, the characteristic Boolean morphism $\vphi: \N \to \B$ is well-defined (cf.~Lemma \ref{lem:boolmorph}).

\begin{thm}
\label{thm:fromN}
Let $\{\vec{v}_i\}_{i\in I}\subseteq \B^k$ be a collection of linearly attached Boolean vectors. Then there exists a collection $\{\vec{u}_i\}_{i \in I} \subseteq \N^k$ of linearly attached natural vectors such that $\vec{v}_i = T(\vec{u}_i)$, where $T: \N^k \to \B^k$ is the $\vphi$-semilinear map induced by the characteristic Boolean morphism $\vphi: \N \to \B$.
\end{thm}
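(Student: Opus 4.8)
The plan is to lift each Boolean vector $\vec{v}_i$ to a natural vector $\vec{u}_i$ having the same \emph{support}, and to arrange that the lift carries a witnessing attachment. Since the induced map $T$ replaces every nonzero entry by $1$, the requirement $\vec{v}_i = T(\vec{u}_i)$ is equivalent to demanding that $\vec{u}_i$ be positive in exactly those coordinates where $\vec{v}_i$ equals $1$, and zero elsewhere. So the whole problem reduces to choosing positive integers for the ``on'' coordinates so that a natural-number attachment survives.

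First I would unpack the hypothesis. Linear attachment of $\{\vec{v}_i\}$ in $\B^k$ gives disjoint $A, B\subseteq I$, not both empty, with $\sum_{a\in A}\vec{v}_a = \sum_{b\in B}\vec{v}_b$ (the only nonzero scalar of $\B$ being $1$). Because addition in $\B$ is logical OR, this equality is the coordinatewise statement that, for each $1\le j\le k$, some $\vec{v}_a$ (with $a\in A$) has a $1$ in coordinate $j$ if and only if some $\vec{v}_b$ (with $b\in B$) does. Writing $A_j = \st{a\in A}{(\vec{v}_a)_j = 1}$ and $B_j = \st{b\in B}{(\vec{v}_b)_j = 1}$, this says precisely that $A_j = \emptyset$ if and only if $B_j = \emptyset$, for every $j$.

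The key step is a coordinatewise balancing. For $a\in A$ I set $(\vec{u}_a)_j \eqdef \crd{B_j}$ when $(\vec{v}_a)_j = 1$ and $(\vec{u}_a)_j \eqdef 0$ otherwise; symmetrically, for $b\in B$ I set $(\vec{u}_b)_j \eqdef \crd{A_j}$ when $(\vec{v}_b)_j = 1$ and $0$ otherwise; and for the remaining indices $i\in I\setminus(A\union B)$ I simply take $\vec{u}_i = \vec{v}_i$ regarded in $\N^k$. This is well defined because each entry is prescribed independently. Support is preserved: if $(\vec{v}_a)_j = 1$ with $a\in A$ then $A_j\neq\emptyset$, hence $B_j\neq\emptyset$ and $\crd{B_j}\ge 1$, so the entry is positive, while it vanishes exactly where $\vec{v}_a$ does, giving $T(\vec{u}_a) = \vec{v}_a$ (and likewise for the other indices). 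Finally, in each coordinate $j$,
\[\sum_{a\in A}(\vec{u}_a)_j = \crd{A_j}\,\crd{B_j} = \sum_{b\in B}(\vec{u}_b)_j,\]
an equality of natural numbers, so that $\sum_{a\in A}\vec{u}_a = \sum_{b\in B}\vec{u}_b$ holds in $\N^k$ with all coefficients equal to $1$; this is exactly a linear attachment of $\{\vec{u}_i\}$ over $\N$ witnessed by the same disjoint sets $A, B$.

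I do not expect a serious obstacle; the only point needing care is the degenerate case in which one of $A, B$ is empty. If, say, $B = \emptyset$, then $\sum_{a\in A}\vec{v}_a = \vec{0}$, and zerosumfreeness of $\B$ forces every $\vec{v}_a = \vec{0}$; then every $A_j$ is empty, each $\vec{u}_a = \vec{0}$, and the identity $\sum_{a\in A}\vec{u}_a = \vec{0}$ is still a valid attachment with $B=\emptyset$. The substantive content is thus the balancing trick whereby the ``on'' entries over $A$ in coordinate $j$ are all assigned the value $\crd{B_j}$ and those over $B$ the value $\crd{A_j}$, making the two column sums coincide at $\crd{A_j}\,\crd{B_j}$ in every coordinate simultaneously.
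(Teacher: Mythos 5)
Your proof is correct and takes essentially the same route as the paper: both reduce Boolean attachment to the coordinatewise statement that the $A$-side and $B$-side supports vanish together, and then lift by choosing positive integer entries that balance each coordinate's column sums while preserving supports (so that $T$ recovers the original vectors). The only difference is the balancing rule: the paper bumps a single entry on the deficient side by the deficit $\gamma_j = |\alpha_j - \beta_j|$ (a three-way case analysis), whereas you set every ``on'' entry over $A$ to $\crd{B_j}$ and over $B$ to $\crd{A_j}$ so that both sides sum to $\crd{A_j}\,\crd{B_j}$ --- a slightly more uniform recipe that also absorbs the degenerate cases.
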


\begin{proof}
    We may assume without loss of generality that $\vec{0} \notin \{\vec{v}_i\}_{i \in I}$. For $1 \leq j \leq k$, we use the notation $(\vec{w})_j$ for the $j$-th component of the vector $\vec{w}$.

    Let $A, B \subseteq I$ be disjoint subsets of indices such that
    \begin{equation}
    \tag{$*$}
    \label{eq:booleq}
    \sum_{a \in A}\vec{v}_a = \sum_{b \in B}\vec{v}_b.
    \end{equation}
    We denote by $\alpha_j, \beta_j$ the number of vectors with nonzero $j$-th component in each of $A$ and $B$:
    \begin{align*}
        &\alpha_j \eqdef \crd{\st{(\vec{v}_a)_j = 1}{a \in A}},
        &&\beta_j \eqdef \crd{\st{(\vec{v}_b)_j = 1}{b \in B}}.
    \end{align*}
    Equation \eqref{eq:booleq} guarantees that these numbers vanish together: either $\alpha_j = \beta_j = 0$ or $\alpha_j, \beta_j > 0$. We denote by $\gamma_j$ their difference:
    \[\gamma_j \eqdef \max\{\alpha_j, \beta_j\} - \min\{\alpha_j, \beta_j\}.\]
    We now show how to construct $\{\vec{u}_i\}_{i \in I} \subseteq \N^k$. Start by initializing $\vec{u}_i = \vec{v}_i$ for all $i \in I$. For each $1 \leq j \leq k$ do:
    \begin{enumerate}[label=(\roman*)]
        \item if $\alpha_j = \beta_j$, move to the next component;
        \item if $\alpha_j > \beta_j$, choose any $b \in B$ with $(\vec{v}_b)_j = 1$ (one such must exist) and define $(\vec{u}_b)_j = \gamma_j+1$, then move to the next component;
        \item if $\alpha_j < \beta_j$, choose any $a \in A$ with $(\vec{v}_a)_j = 1$ (one such must exist) and define $(\vec{u}_a)_j = \gamma_j+1$, then move to the next component.
    \end{enumerate}
    It is clear that at the end of this procedure $T(\vec{u}_i) = \vec{v}_i$ for all $i \in I$, since we have not changed any nonzero component to zero or vice versa. Moreover, we claim that the following equality holds in $\N^k$:
    \[\sum_{a \in A}\vec{u}_a = \sum_{b \in B}\vec{u}_b.\]
    Indeed, we can see it holds for each component $1 \leq j \leq k$. Note that, by the definition of $\alpha_j$ and $\beta_j$, when the sums are computed in $\N$ we have
    \begin{align*}
        &\sum_{a\in A}(\vec{v}_a)_j = \alpha_j,
        &&\sum_{b\in B}(\vec{v}_b)_j = \beta_j.
    \end{align*}
    \begin{enumerate}[label=(\roman*)]
        \item if $\alpha_j = \beta_j$, the $j$-th component of the $\vec{u}_i$'s is identical to that of the $\vec{v}_i$'s, so that 
        \[\sum_{a \in A}(\vec{u}_a)_j = \alpha_j = \beta_j = \sum_{b \in B}(\vec{u}_b)_j.\]
        \item if $\alpha_j > \beta_j$, then the $j$-th component of the vectors $\vec{u}_a$ (with $a \in A$) is identical to that of $\vec{v}_a$ (with $a \in A$) so that
        \[\sum_{a \in A}(\vec{u}_a)_j = \alpha_j.\]
        On the other hand, the $j$-th component of the vectors $\vec{u}_b$ (with $b \in B$) is identical to that of $\vec{v}_{b}$ (with $b \in B$), except for one vector whose component increased by $\gamma_j$. Therefore,
        \[\sum_{b \in B}(\vec{u}_b)_j = \beta_j+\gamma_j = \alpha_j.\]
        \item If $\alpha_j < \beta_j$ then completely analogously to the previous case we have
        \[\sum_{b \in B}(\vec{u}_b)_j = \beta_j = \alpha_j+\gamma_j = \sum_{a \in A}(\vec{u}_a)_j.\]
    \end{enumerate}
\end{proof}

\subsection{Interlude: Unpacking the Proof}
\label{subsec:unpack}
The solution to the puzzle presented as Corollary \ref{crl:solhowlarge} can be ``translated'' to a simple linear-algebraic solution by unpacking all the theorems and definitions having to do with dioids, as follows.

\begin{proof}[``Translated'' Solution to Puzzle \ref{puz:howlarge}]
Let $\ffam \subseteq \pset{\{1, 2, \ldots, n\}}$ be a family with $\crd{\ffam}\geq n+1$. To each set in $\ffam = \{S_1, S_2, \ldots, S_{k}\}$ assign its incidence vector $\vec{v}_1, \ldots, \vec{v}_k$; since $k\geq n+1$, the resulting collection of vectors must be linearly dependent in $\R^n$. Then there exists a nontrivial linear combination of $\vec{0}$:
\[\lambda_{1}\vec{v}_1 + \cdots + \lambda_{k}\vec{v}_k = \vec{0}\]
and we may assume without loss of generality that none of the scalars are $0$. Let $A$ be the set of indices corresponding to positive scalars, and $B$ the set of indices corresponding to negative scalars.
\[\sum_{a \in A}\lambda_{a}\vec{v}_a = \sum_{b \in B}(-\lambda_b)\vec{v}_b\]
with all scalar coefficients positive. Then $\mcal{A} \eqdef \st{S_a}{a \in A}$ and $\mcal{B} \eqdef \st{S_b}{b\in B}$ are disjoint subfamilies of $\ffam$ (not both empty) which have the same union, since $\Union\mcal{A}$ and $\Union\mcal{B}$ have the same incidence vector (with $i$-th component $0$ if and only if the $i$-th component of $\sum_{a\in A}\lambda_a\vec{v}_a = \sum_{b \in B}(-\lambda_b)\vec{v}_b$ is $0$).
\end{proof}

The solution is both short and elementary, however it requires at least three distinct insights. First, it is not apparent that it is profitable to work with incidence vectors over $\R$. While translating sets to incidence vectors is by now a standard technique in extremal combinatorics, it is not apparent that this technique is applicable to this problem---because the union operation does not correspond in any clear way to field operations, as explained in Section \ref{sec:motivation} above. It is also not apparent that the field should be chosen to be infinite rather than finite.

Second, the idea of separating the positive and negative scalars is very natural in the context of semirings, but not often encountered in general linear algebra, apart perhaps from investigations into various positivity conditions.

Finally, the idea that only the vanishing or nonvanishing of the coordinates matter, rather than the individual components is natural when working with ``linearity'' relations over $\B$ or over an \mCOS, but not in general linear algebra, apart perhaps from investigations into combinatorial classes of matrices.

The setting of semirings, by limiting the supply of available mathematical tools, narrows the search space of possible solutions. Techniques which are natural in the setting of semirings seem more arbitrary in the setting of general linear spaces. This is even more apparent with the solution to Puzzle \ref{puz:criterion} (presented in the next section), which characterizes all maximal disparate-union families; I do not know of a simple ``translation'' of that solution to the setting of vector spaces over fields.

\section{Bideterminant}\label{sec:bid}
Matrix addition and matrix multiplication are both well-defined for matrices with entries in an arbitrary commutative semiring, since both of these operations are defined in terms of the underlying addition and multiplication operations. On the other hand, the determinant uses the additive-inverses in its definition, and so cannot be straightforwardly defined for matrices over semirings. As a workaround, we keep track of the positive and negative parts of the determinant separately, through an object known as the bideterminant.

\begin{defi}[{\cite[p.~214]{Gol99}, \cite[p.~182]{GM08}}]
    Let $A = [a_{ij}]$ be an $n\times n$ square matrix with entries in a commutative semiring $(M, +, \times)$. Denote by $S_n^+$ and $S_n^-$ the even and odd permutations of the symmetric group, respectively (i.e., $S_n^+ = A_n$, the alternating group and $S_n^- = S_n\setminus A_n$). Then the \demph{positive determinant} $\det^+A$ and \demph{negative determinant} $\det^-A$ are defined as
    \begin{align*}
        &\mbox{$\det^+A$} = \sum_{\sigma\in S_n^+}\prod_{i=1}^{n}a_{i\sigma(i)},
        &&\mbox{$\det^-A$} = \sum_{\sigma \in S_n^-}\prod_{i=1}^{n}{a_{i\sigma(i)}}.
    \end{align*}
    (All operations should be performed in $M$.) The pair $(\det^+A, \det^-A)$ is called the \demph{bideterminant} of $A$.
\end{defi}

For convenience, Gondran and Minoux call the summand
\[w_\sigma(A) \eqdef \prod_{i=1}^{n}a_{i\sigma(i)}\]
the \demph{weight} of the permutation $\sigma$ in the matrix $A$. In the literature on nonnegative matrices, this is known as the \demph{(generalized) diagonal} of the matrix $A$ associated with the permutation $\sigma$, and we shall adopt this latter terminology.

The bideterminant shares many of the properties of the determinant. The following properties are all straightforward consequences of the definition above in terms of permutations and we omit the proofs.
\begin{prop}[Elementary properties of the bideterminant]
Let $(M, +, \times)$ be a commutative semiring, for any $n\times n$ matrix $A$ with entries in $M$ and bideterminant $(\det^+A, \det^-A)$:
\begin{enumerate}[label=(\roman*)]
\label{prop:bid}
    \item The bideterminant is \demph{alternating}: if two rows are swapped in $A$, the resulting matrix has bideterminant $(\det^-A, \det^+A)$.
    \item The bideterminant is \demph{multilinear}:
    \begin{itemize}
        \item if a row of $A$ is multiplied by a scalar $\lambda\in M$, then the resulting matrix has bideterminant $\lambda(\det^+A, \det^-A)$;
        \item if a vector $\vec{v}$ is added to the $i$-th row of $A$, then the resulting matrix has bideterminant $(\det^+A, \det^-A) + (\det^+B, \det^-B)$, where $B$ is the matrix which is identical to $A$ except for having $\vec{v}$ as its $i$-th row.
    \end{itemize} 
    \item The bideterminant of the identity matrix $I$ is $(\det^+I, \det^-I) = (1, 0)$.
    \item The bideterminant of a matrix with a row of $0$'s is $(0, 0)$.
    \item The bideterminant of the transpose $A^T$ is the same as the bideterminant of $A$:\\ $(\det^+A^T, \det^-A^T) = (\det^+A, \det^-A)$.
    \item For $A = [a_{ij}]$ upper-triangular (i.e., $a_{ij} = 0$ for $i>j$), the bideterminant is $(\det^+A, \det^-A) = (\prod_{i=1}^{n}a_{ii}, 0)$.
    \item\label{it:mor} Let $\phi: (M, +, \times) \to (N, +, \times)$ be a semiring morphism, and denote by $\phi(A)$ the matrix resulting from applying $\phi$ to each entry of $A$. Then $(\det^+\phi(A), \det^-\phi(A)) = (\phi(\det^+A), \phi(\det^-A))$.
\end{enumerate}
\end{prop}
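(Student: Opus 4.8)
The plan is to derive every item from the single observation that the bideterminant is assembled from the weights $w_\sigma(A) = \prod_{i=1}^n a_{i\sigma(i)}$, sorted into two bins according to the parity of $\sigma$. Classically one folds the sign $\operatorname{sgn}(\sigma)$ into each term; here, since there are no additive inverses, that sign is externalized into the choice of bin ($\det^+$ versus $\det^-$). Thus the heart of each proof is to understand how the operation in question acts on individual weights and on the even/odd partition $S_n = S_n^+ \sqcup S_n^-$.

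First I would dispatch the ``pointwise'' properties (ii)--(iv), (vi), and (vii), which do not move permutations between bins and follow from the semiring axioms applied weight-by-weight. For multilinearity (ii): scaling row $i$ by $\lambda$ turns $a_{i\sigma(i)}$ into $\lambda a_{i\sigma(i)}$, so by commutativity $w_\sigma$ acquires a factor $\lambda$, and distributivity of the sum pulls $\lambda$ out of both $\det^+$ and $\det^-$; additivity in a row is the same computation with distributivity splitting $a_{i\sigma(i)} + v_{\sigma(i)}$ across the product. For (iii) and (iv) I would invoke the absorbing axiom $a \times 0 = 0$: a row of zeros kills every weight, and for the identity matrix every weight vanishes except $w_{\mathrm{id}} = 1$, which lands in $\det^+$ since $\mathrm{id}$ is even. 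Property (vi) needs the elementary permutation lemma that $\sigma(i) \ge i$ for all $i$ forces $\sigma = \mathrm{id}$ (argue downward from $\sigma(n) = n$); any other $\sigma$ meets a below-diagonal zero entry, so again only $w_{\mathrm{id}} = \prod_i a_{ii}$ survives. Property (vii) is immediate from the morphism axioms: $\phi$ multiplicative gives $w_\sigma(\phi(A)) = \phi(w_\sigma(A))$ (with $\phi(1)=1$ covering the empty product), and $\phi$ additive then commutes past each sum.

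The two properties that genuinely exercise the bin structure are (i) and (v). For the alternating property (i), swapping rows $p$ and $q$ to form $A'$ satisfies $w_\sigma(A') = w_{\sigma\tau}(A)$, where $\tau = (p\,q)$, as one sees by reindexing the product through the involution $\tau$; since $\tau$ is odd, right-composition $\sigma \mapsto \sigma\tau$ is a bijection carrying $S_n^+$ onto $S_n^-$ and back, so $\det^+ A' = \det^- A$ and $\det^- A' = \det^+ A$. For the transpose (v), reindexing the product by $j = \sigma(i)$ gives $w_\sigma(A^T) = w_{\sigma^{-1}}(A)$; here inversion $\sigma \mapsto \sigma^{-1}$ is a parity-\emph{preserving} bijection of each bin to itself, so both determinants are fixed.

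The main obstacle---though a mild one---is precisely this parity bookkeeping in (i) and (v): with no cancellation available, one cannot let signs take care of themselves, and must instead verify directly that the relevant relabeling of permutations is a genuine bijection mapping $S_n^+$ onto the correct bin. Everything else is a routine unwinding of the definition against the semiring axioms, which is why the proofs may safely be omitted.
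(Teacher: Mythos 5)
Your proposal is correct and takes exactly the route the paper intends: the paper omits these proofs, remarking only that they are ``straightforward consequences of the definition above in terms of permutations,'' and your weight-by-weight verification---with the parity bookkeeping handled by the bijections $\sigma \mapsto \sigma\tau$ (parity-reversing, for the row swap) and $\sigma \mapsto \sigma^{-1}$ (parity-preserving, for the transpose)---is precisely that argument. You also correctly flag the two ingredients that genuinely get used, commutativity of $\times$ for the product reindexings and the absorbing axiom $a \times 0 = 0$ for items (iii), (iv), and (vi), so nothing is missing.
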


We are interested in the relationship between the bideterminant and linear attachment in the standard semimodule $M^n$. For square matrices over a field, it is a basic theorem of linear algebra that the columns (equivalently, rows) are linearly dependent if and only if the determinant is $0$. In terms of the bideterminant, this would mean $\det^+A = \det^-A$. This equality, however, does \emph{not} hold for general commutative semirings or even dioids.

When both $(M, +)$ and $(M\setminus\{0\}, \times)$ are cancellative, the semiring can be embedded into a commutative ring, and so the equality holds. A more sophisticated class of semirings for which equality also holds is a generalization of tropical rings. A semiring $(M, +, \times)$ is called \demph{selective}\footnote{This is Gondran and Minoux's terminology \cite{GM08}, Golan \cite[p.~228]{Gol99} calls such rings \demph{extremal}.} if $a+b \in \{a, b\}$ for all $a, b \in M$. Every selective semiring is also a dioid, the canonical preorder being a total order. A \demph{selective-invertible dioid} is a selective dioid $(M, +, \times)$ such that $(M\setminus\{0\}, \times)$ is a group (an example of such a dioid is min-plus on the nonnegative reals $(\R^+\union\{\infty\}, \min, +)$). By associating with a matrix a certain complete bipartite graph and carefully studying its perfect matchings and cycles, Gondran and Minoux \cite{GM77,GM78} (reproduced in \cite[ch.~5]{GM08}) proved with much ingenuity that a matrix with entries in a selective-invertible dioid has linearly-attached columns if and only if $\det^+A = \det^-A$. Part of their proof contains a more general unidirectional result for matrices with entries in a selective dioid (which is not necessarily selective-invertible): if its columns satisfy a restricted (all scalars are $1$) linear attachment relation, then $\det^+A = \det^-A$.

The Boolean dioid $\B$ is trivially selective-invertible (note that $(\B\setminus\{0\}, \times)$ is the trivial group), so Gondran and Minoux's Theorem \cite{GM77,GM78} applies: the columns of $A$ are linearly attached if and only if $\det^+A= \det^-A$. We now offer a much simpler non-graph-theoretic proof of this result for $\B$.

\begin{thm}
\label{thm:boolbid}
    Let $A$ be an $n\times n$ matrix with entries in $\B$. The columns of $A$ are linearly attached as vectors in $\B^n$ if and only if $\det^+A = \det^-A$.
\end{thm}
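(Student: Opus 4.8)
The plan is to route both implications through an auxiliary real matrix. Recall first the Boolean meaning of the bideterminant: in $\B$ the weight $w_\sigma(A) = \prod_i a_{i\sigma(i)}$ equals $1$ exactly when $\sigma$ is a nonzero diagonal (a transversal of ones of $A$), and a Boolean sum is $1$ iff some summand is, so $\det^+ A = 1$ iff $A$ has an even nonzero diagonal and $\det^- A = 1$ iff it has an odd one. Hence $\det^+ A = \det^- A$ asserts exactly that $A$ has nonzero diagonals of both parities, or of neither. Now call a nonnegative real matrix $U$ a \emph{positive lift} of $A$ if $u_{ij}>0$ precisely when $a_{ij}=1$. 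The key bridge I will establish is that the columns of $A$ are linearly attached in $\B^n$ if and only if some positive lift $U$ has $\det U = 0$ over $\R$; granting this, the theorem reduces to matching ``$A$ admits a positive lift with vanishing real determinant'' with ``$\det^+ A = \det^- A$'', which I treat last.

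To prove the bridge, the direction ``attached $\Rightarrow$ vanishing lift'' uses Theorem~\ref{thm:fromN}: a Boolean attachment of the columns lifts to natural vectors $\vec u_j \in \N^n$ of the same supports satisfying $\sum_{j\in P}\vec u_j = \sum_{j\in Q}\vec u_j$ for disjoint $P,Q$. Assembling the $\vec u_j$ into a matrix $U$ (taking $\vec u_j = \vec c_j$ for the remaining indices) gives a positive lift whose columns obey a nontrivial $\R$-dependence, whence $\det U = 0$. For the converse, if a positive lift $U$ has $\det U = 0$ then its columns are $\R$-dependent, and splitting a nontrivial relation $\sum_j \lambda_j \vec u_j = \vec{0}$ by the sign of $\lambda_j$ gives $\sum_{j\in P}\lambda_j \vec u_j = \sum_{j\in Q}(-\lambda_j)\vec u_j$ with all coefficients positive; since a positive combination of nonnegative vectors has support equal to the union of their supports, equality of the two sides forces $\bigvee_{j\in P}\vec c_j = \bigvee_{j\in Q}\vec c_j$, a Boolean attachment. (A zero column of $A$ is attached outright, so one may assume there is none, which keeps both $P$ and $Q$ nonempty.)

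Finally I connect vanishing lifts to parities. For any positive lift, the decomposition $\det U = \det^+_{\R}U - \det^-_{\R}U$ into sums of the positive weights $w_\sigma(U)$ over even, resp.\ odd, permutations has $\det^{\pm}_{\R}U > 0$ iff $A$ carries a diagonal of that parity; so $\det U = 0$ already forces both parities or neither, i.e.\ $\det^+ A = \det^- A$. Conversely, suppose $\det^+ A = \det^- A$. If $A$ has no nonzero diagonal, then $U=A$ has $\det U = 0$. Otherwise $A$ carries an even diagonal $\sigma_0$ and an odd diagonal $\tau_0$, and I argue by continuity: on the connected set of positive lifts $(\R_{>0})^{E}$, with $E$ the support of $A$, the map $U \mapsto \det U$ is continuous; setting the entries along $\sigma_0$ to a large value $t$ and the rest to $1$ makes the term $+t^n$ dominate so that $\det U > 0$, while doing the same along $\tau_0$ makes $-t^n$ dominate so that $\det U < 0$, and the intermediate value theorem supplies a lift with $\det U = 0$.

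The crux is this both-parities case. The ``extra'' ones of $A$---entries lying outside a chosen pair of opposite-parity diagonals---are exactly what obstruct a direct matching-exchange construction of two disjoint subfamilies with equal union, and they are also why the columns can be $\R$-independent (Example~\ref{eg:noconv}), so Theorem~\ref{thm:toR} does not apply. I expect the continuity step to demand the most care: pinning down the dominant-term asymptotics as $t\to\infty$ and the connectedness underlying the intermediate value theorem. The remaining steps are routine bookkeeping with supports together with the elementary bideterminant identities of Proposition~\ref{prop:bid}.
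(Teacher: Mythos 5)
Your proposal is correct and takes essentially the same approach as the paper's proof: the forward direction lifts a Boolean attachment to $\N^n$ via Theorem \ref{thm:fromN} and concludes from vanishing of the real determinant, while the converse produces a nonnegative real matrix with the same support as $A$ and zero determinant by a continuity/intermediate-value argument, then splits a real dependence by sign to recover a Boolean attachment. The only differences are cosmetic: you package the link as a ``positive lift'' bridge lemma and run the intermediate value theorem along a path between two boosted lifts (one per parity), whereas the paper inflates the entries of a single odd nonzero diagonal of $A$ by a variable $x$ and uses $p(1)=\det_{\R}A>0$ together with $p(x)\to-\infty$.
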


For convenience, we separate the proof into the forward and backward directions. To avoid confusion, we use subscripts to indicate the semiring over which the determinant is taken.

\begin{proof}[Proof of the ``only if'' direction]
    Suppose the columns of $A$ are linearly attached as vectors in $\B^n$. By Theorem \ref{thm:fromN}, there exists an $n\times n$ matrix $N$ with entries in $\N$ such that $\vphi(N) = A$, where $\vphi: \N \to \B$ is the characteristic Boolean morphism (and $\vphi(N)$ is interpreted as in Proposition \ref{prop:bid}\ref{it:mor}, by applying $\vphi$ to each entry of the matrix), and the rows of $N$ are linearly attached (as vectors in $\N^n$). Now, the columns of $N$ are linearly dependent as vectors in $\R^n$, so that $\det_{\R}N = 0$. By the definition of the determinant, this means that $\det_{\R}^+N = \det_{\R}^-N$, which in turn means $\det_{\N}^+N = \det_{\N}^-N$. By Proposition \ref{prop:bid}\ref{it:mor} we have
    \[
        \mbox{$\det_{\B}^{\pm}{A}$} = \mbox{$\det_{\B}^{\pm}{\vphi(N)}$} = \mbox{$\vphi(\det_{\N}^{\pm}N)$}
    \]
    from which we conclude $\det^+_{\B}A = \det_{\B}^-A$.
\end{proof}

\begin{proof}[Proof of the ``if'' direction]
    Suppose $\det^+A = \det^-A$. Note that if $\det_{\R}A = 0$, then the columns of $A$ are linearly dependent as vectors in $\R^n$, and from Theorem \ref{thm:toR} we may conclude that the columns of $A$ are linearly attached as vectors in $\B^n$. Assume therefore that $\det_{\R}A \neq 0$, and without loss of generality that $\det_{\R}^+A > \det_{\R}^-A$ (otherwise, interchange two columns of $A$). Since the entries of $A$ are either $0$ or $1$, we conclude that $\det_{\R}^+ > 0$ which implies $\det_{\B}^+A = 1$. Since $\det_{\B}^+A = \det_{\B}^-A$, we conclude that $\det_{\B}^-A = 1$ so that $\det_{\R}^+A > 0$. To summarize,
    \[\mbox{$\det_{\R}^+A$} > \mbox{$\det_{\R}^-A$} > 0.\]

    Let $(\R^+, +, \times)$ be the dioid of nonnegative real numbers, which we note is zerosumfree and entire so that the characteristic Boolean morphism $\vphi: \R^+ \to \B$ is well-defined. We prove that there exists a matrix $R$ with entries in $\R^+$ such that $\vphi(R) = A$ and whose columns are linearly attached as vectors in $(\R^+)^n$. By Lemma \ref{lem:forward}, this implies that the columns of $A$ are linearly attached as vectors in $\B^n$.

    To see that such a matrix $R$ must exist, let $\sigma \in S_n^-$ be an odd permutation with a nonzero generalized diagonal: 
    \[w_{\sigma}(A) = \prod_{i=1}^{n}a_{i\sigma(i)} = 1.\] 
    Such a permutation must exist since $\det_{\R}^-(A) \neq 0$ and the entries of $A$ are either $0$ or $1$. This means that each $a_{i\sigma(i)}$ is $1$. Let $R(x)$ be the matrix obtained from $A$ by changing each $a_{i\sigma(i)}$ to a variable $x$, and consider the polynomial $p(x) \eqdef \det_{\R}R(x)$.

    For $x = 1$, we have $R(x) = A$ so that $p(1) > 0$. On the other hand, $p(x) = \det_{\R}^+R(x) - \det_{\R}^-(x)$ contains the term $-x^n$ and every positive term is of order at most $x^{n-2}$. Therefore, $\lim_{x\to\infty}p(x) = -\infty$, so by the continuity of the (real) determinant, there exists some $a \in (1, \infty)$ with $p(a) = 0$.

    We have $\vphi(R(a)) = A$ and the columns of $R(a)$ are linearly dependent as vectors over $\R^n$; so $\vec{0}$ is a nontrivial $\R$-linear combination of the columns of $R(a)$. Separating the positive and negative scalar coefficients we obtain an $\R^+$-linear attachment of the columns of $R(a)$. 
\end{proof}

As an immediate corollary, we obtain a criterion for the disparate union property and a solution to Puzzle \ref{puz:criterion}.

\begin{crl}[Solution to Puzzle \ref{puz:criterion}]
\label{crl:solcrit}
    A family $\ffam \subseteq \pset{\{1, 2, \ldots, n\}}$ of size $n$ has the disparate union property if and only if the bideterminant (computed in $\B$) of the matrix of incidence vectors is $(1, 0)$ or $(0, 1)$.
\end{crl}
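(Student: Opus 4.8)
The plan is to translate the disparate union property into a statement about linear attachment of the incidence vectors, and then invoke Theorem \ref{thm:boolbid} to express this in terms of the bideterminant. Let $\ffam = \{S_1, \ldots, S_n\}$ have exactly $n$ sets, and form the $n \times n$ Boolean matrix $A$ whose columns are the incidence vectors $\vec{v}_1, \ldots, \vec{v}_n \in \B^n$. The crucial observation, already implicit in the proof of Corollary \ref{crl:solhowlarge}, is that addition of incidence vectors in $\B$ corresponds exactly to union of the underlying sets. Hence the existence of two disjoint subfamilies $\mcal{A}, \mcal{B} \subseteq \ffam$ (not both empty) with $\Union\mcal{A} = \Union\mcal{B}$ is precisely the statement that the columns of $A$ are linearly attached in $\B^n$ (the only nonzero scalar in $\B$ being $1$).

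With this dictionary in hand, the argument is almost immediate. First I would note that $\ffam$ \emph{fails} the disparate union property if and only if such disjoint subfamilies with equal unions exist, which by the preceding paragraph is equivalent to the columns of $A$ being linearly attached in $\B^n$. By Theorem \ref{thm:boolbid}, the columns of $A$ are linearly attached if and only if $\det^+A = \det^-A$ (with the bideterminant computed in $\B$). Taking the contrapositive, $\ffam$ \emph{has} the disparate union property if and only if $\det^+A \neq \det^-A$.

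It then remains only to observe that over $\B$ the bideterminant lives in $\{0,1\}^2$, so $\det^+A \neq \det^-A$ means the bideterminant is either $(1,0)$ or $(0,1)$, which is exactly the stated criterion. One small point to verify is that the case distinction requires no set to be empty nor the full subfamily to be excluded: since a linear attachment in the sense of Definition \ref{def:linatt} permits at most one of $A, B$ to be empty (matching the ``not both empty'' clause in the puzzle), the correspondence is faithful in both directions.

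I expect the main (though modest) obstacle to be bookkeeping around the degenerate cases rather than any deep difficulty: one must confirm that the empty-union convention and the ``not both empty'' restriction on subfamilies line up exactly with the asymmetric ``at most one of which is $\emptyset$'' condition in the definition of linear attachment, and that a repeated or empty incidence vector does not cause a mismatch. Since Corollary \ref{crl:solhowlarge} already exercised precisely this correspondence, the heavy lifting has been done, and the result follows as an immediate corollary of Theorem \ref{thm:boolbid}.
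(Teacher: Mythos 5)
Your proof is correct and takes essentially the same route as the paper, which presents Corollary \ref{crl:solcrit} as an immediate consequence of Theorem \ref{thm:boolbid} via exactly the dictionary you describe: disjoint subfamilies (not both empty) with equal unions correspond to linear attachments of the Boolean incidence vectors, since $1$ is the only nonzero scalar in $\B$ and vector addition over $\B$ is union. Your verification of the degenerate cases (an empty set in $\ffam$, the ``not both empty'' versus ``at most one of $A,B$ empty'' clauses) lines up with the paper's conventions and fills in the bookkeeping the paper leaves implicit.
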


Continuing our discussion from Section \ref{subsec:unpack}, it appears that this criterion is not easy to directly ``translate'' to linear algebra, because it compares the positive and negative parts of the determinant (corresponding to even and odd permutations). In a vector space over a field, both the determinant and the permanent can be computed from the bideterminant as $\det A = \det^+ A - \det^-A$ and $\per A = \det^+ A + \det^-A$. However, while computing the determinant over a field is simple to do efficiently (e.g., by Gaussian elimination), it is conjectured that computing the permanent is intractable in general. In a celebrated paper, L.~Valiant defined the complexity class $\#\mathsf{P}$, which is at least as hard as $\mathsf{NP}$, and proved that computing (over $\R$ or $\C$) the permanent of a matrix with entries in $\{0, 1\}$ is $\#\mathsf{P}$-complete \cite{Val79}. In contrast, computing the permanent over dioids is sometimes easy while computing the determinant is sometimes $\mathsf{NP}$-complete; Minoux \cite{Min82} (in an unpublished note, the argument is reproduced in \cite[pp.~204--205]{GM08}), noted one such example in a Min-Max algebra. This naturally raises the question of whether the criterion presented in Corollary \ref{crl:solcrit} can be computed efficiently. We now argue that this is indeed the case. 

Butkovi\v{c} \cite{But95} calls a \demph{min-algebra} a triple $(G, \min, \times)$ where $(G, \times)$ is a linearly-ordered commutative group. An example of such a min-algebra is $(\R, \min, +)$. Butkovi\v{c} proved \cite{But95} that a matrix $A$ with with entries in a min-algebra has $\det^+A = \det^-A$ if and only if a certain associated directed graph has an even (directed) cycle (also called an \emph{even dicycle} or an \emph{even circuit}), and that therefore the problem of deciding whether $\det^+A = \det^-A$ for that class of matrices is polynomial-time reducible to the problem of deciding whether a given directed graph has an even cycle. The even-cycle problem, along with multiple other problems which polynomial-time reduce to it, was later proved to be polynomial-time solvable in a famous paper of Robertson, Seymour, and Thomas \cite{RST99}.

We now show how to use Butkovi\v{c}'s Theorem \cite{But95} to check our criterion from Corollary \ref{crl:solcrit}. Let $A$ be a matrix with entries in $\B$. To check whether $\det^+A = \det^-A$:
\begin{itemize}
    \item Check whether $\det_{\R}A = 0$. If so, we are done.
    
    \item Otherwise, $\det_{\R}A \neq 0$, so that the bideterminant is one of $(0, 1)$, $(1, 0)$, or $(1, 1)$. Then $\det^+A = \det^-A$ if and only if each of $S_n^+$ and $S_n^-$ has a permutation $\sigma$ such that the associated generalized diagonal is nonzero $w_{\sigma}(A) = 1$.
    
    \item Let $A^c$ be the matrix obtained from $A$ by switching all the $0$ entries to $1$ and all the $1$ entries to $0$, i.e., the $(i,j)$-entry of $A^c$ is $1$ if and only if the $(i, j)$-entry of $A$ is $0$. We shall consider $A^c$ as a matrix with entries in $(\R, \min, +)$.

    Note that $\det^+A = \det^-A$ if and only if each of $S_n^+$ and $S_n^-$ has a permutation such that the associated generalized diagonal in $A^c$ is $0$ when computed in $(\R, \min, +)$. By the definition of $A^c$ (and the assumptions on $A$), this is the case if and only if $\det^+A^c = \det^-A^c$ when computed in $(\R, \min, +)$, which completes the reduction. 
\end{itemize}

We conclude the paper with a corollary applying our results from Section \ref{sec:lin} to obtain a criterion for linear attachment over \mCOS{s}. Note that an \mCOS may or may not be selective, and (excepting $\B$) is never selective-invertible, so Gondran and Minoux's Theorem does not apply to such dioids. A consequence of their unidirectional theorem for selective dioids is that if the columns of a matrix (with entries in a selective dioid $M$) are linearly attached, then there must exist some scalar $\lambda \in M$ such that $\lambda\det^+ A = \lambda\det^-A$. In an \mCOS, an equation of this form means that $\det^+A, \det^-A$ are zero or nonzero together (since we can always take $\lambda = \m$). We prove that this condition is necessary and sufficient for linear attachment over an \mCOS.

\begin{crl}
\label{crl:mbid}
    Let $A$ be an $n\times n$ matrix with entries in an \mCOS $(M, +, \times)$. Then the columns of $A$ are linearly attached if and only if $\det^+A, \det^-A$ are both zero or both nonzero.
\end{crl}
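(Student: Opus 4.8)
The plan is to reduce everything to the Boolean case already handled by Theorem \ref{thm:boolbid}, using the characteristic Boolean morphism $\vphi: M \to \B$ as the bridge. The key observation is that the two pieces we care about---linear attachment of the columns and the vanishing/non-vanishing pattern of $\det^+A, \det^-A$---are both preserved in both directions by $\vphi$, precisely because $M$ is an \mCOS. First I would invoke Lemma \ref{lem:boolmorph}: since $M$ is an \mCOS, it is zerosumfree (Lemma \ref{lem:zerosum}) and entire (the lemma preceding Lemma \ref{lem:boolmorph}), so $\vphi$ is a genuine semiring morphism, and moreover it is entire. Let $T_\vphi: M^n \to \B^n$ be the induced $\vphi$-semilinear map and write $\vphi(A)$ for the Boolean matrix obtained by applying $\vphi$ entrywise.

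The heart of the argument is the translation of each side of the desired equivalence across $\vphi$. For the linear-attachment side, Theorem \ref{thm:bool} gives exactly what we need: the columns of $A$ are linearly attached in $M^n$ if and only if the columns of $\vphi(A) = T_\vphi(A)$ are linearly attached in $\B^n$. For the bideterminant side, I would apply Proposition \ref{prop:bid}\ref{it:mor}, which says $\det^{\pm}_\B \vphi(A) = \vphi(\det^{\pm}_M A)$. Since $\vphi$ sends $0 \mapsto 0$ and every nonzero element to $1$, the condition ``$\det^+_M A$ and $\det^-_M A$ are both zero or both nonzero'' is equivalent to $\vphi(\det^+_M A) = \vphi(\det^-_M A)$, i.e., to $\det^+_\B \vphi(A) = \det^-_\B \vphi(A)$. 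Chaining these: the columns of $A$ are linearly attached $\iff$ the columns of $\vphi(A)$ are linearly attached (Theorem \ref{thm:bool}) $\iff$ $\det^+_\B \vphi(A) = \det^-_\B \vphi(A)$ (Theorem \ref{thm:boolbid}, valid since $\vphi(A)$ is a Boolean matrix) $\iff$ $\det^+_M A, \det^-_M A$ are both zero or both nonzero (Proposition \ref{prop:bid}\ref{it:mor}). This completes the equivalence.

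The step I expect to require the most care is verifying that the entirety of $\vphi$ is genuinely being used in the right place, and in particular confirming that the hypotheses of Theorem \ref{thm:bool} are met---namely that $M$ is an \mCOS so that \emph{both} directions of Theorem \ref{thm:bool} are available. (The forward direction alone, Lemma \ref{lem:forward}, would only give one implication; it is the converse, special to \mCOS{s}, that closes the loop.) I would also take a moment to note explicitly that $\B = \{0,1\}$ has only the single nonzero scalar $1$, so that ``$\det^+_\B \vphi(A) = \det^-_\B \vphi(A)$'' is literally an equality of two elements of $\B$ and the phrase ``both zero or both nonzero'' collapses to this equality under $\vphi$; there is no loss of information because the only values in play are $0$ and $1$. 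Everything else is a direct citation, so the proof should be short.
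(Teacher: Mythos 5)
Your proposal is correct and follows essentially the same route as the paper's own proof: both reduce to the Boolean case via the characteristic Boolean morphism, chaining Theorem \ref{thm:bool}, Theorem \ref{thm:boolbid}, and Proposition \ref{prop:bid}\ref{it:mor}, then concluding from the definition of $\vphi$ that $\vphi(\det_M^+ A) = \vphi(\det_M^- A)$ is equivalent to both being zero or both nonzero. Your explicit verification that $\vphi$ is well-defined and entire (via Lemmas \ref{lem:zerosum} and \ref{lem:boolmorph}) is a small addition the paper leaves implicit; otherwise the arguments coincide.
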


\begin{proof}
    Let $\vphi: M \to \B$ be the characteristic Boolean morphism. By Theorem \ref{thm:bool}, the columns of $A$ are linearly attached in $M^n$ if and only if the columns of $\vphi(A)$ are linearly attached in $\B^n$. By Theorem \ref{thm:boolbid}, the columns of $\vphi(A)$ are linearly attached if and only if $\det_{\B}^+\vphi(A) = \det_{\B}^-\vphi(A)$. Applying Proposition \ref{prop:bid}\ref{it:mor}, we find that the columns of $M$ are linearly attached in $M^n$ if and only if
    \[\mbox{$\vphi(\det_{M}^+A)$} = \mbox{$\det_{\B}^+\vphi(A)$} = \mbox{$\det_{\B}^-\vphi(A)$} = \mbox{$\vphi(\det_M^-A)$}.\]
    By the definition of $\vphi$ we conclude that the columns of $M^n$ are linearly attached if and only if $\det_{M}^+ A, \det_{M}^-A$ are both zero or both nonzero.
\end{proof}

\section{Conclusion}
Finding a suitable algebraic model for a given problem is a common proof strategy throughout mathematics. In this paper we defined the algebraic structure of \mCOS{s} (Definition \ref{def:mCOS}) and suggest it is a useful setting for combinatorial questions involving idempotent operations on families of finite sets (such as union). Any commutative monoid $(M, \times)$ can be embedded in an \mCOS via LaGrassa's construction \cite{LaG95}, and if the monoid is finite the powerset construction results in an \mCOS (Lemma \ref{lem:powerdioid}).

We have shown that a generalization of linear independence called \emph{linear attachment} (Definition \ref{def:linatt}) is particularly easy to detect in moduloids over \mCOS{s}, reducing to detecting linear attachment over the Boolean dioid $\B$ (Theorem \ref{thm:bool}). Linear attachment over $\B$ can in turn be detected by the bideterminant (Theorem \ref{thm:boolbid}), and this can be done efficiently via Butkovi\v{c}'s algorithm \cite{But95} by solving the even-cycle problem. Taken together, these results allow us to adapt \emph{dimension arguments} in extremal combinatorics to the setting of moduloids over \mCOS{s}, answering Puzzle 1 (Corollary \ref{crl:solhowlarge}) and Puzzle 2 (Corollary \ref{crl:solcrit}) with which we started the paper.

Golan's monograph \cite{Gol99} and Gondran and Minoux's monograph \cite{GM08} showcase many applications of semiring theory in general (the former) and dioids in particular (the latter), mainly from the fields of optimization and operations research; it is natural to explore the use of these algebraic structures in addressing purely combinatorial questions such as those arising in extremal combinatorics. There is an intriguing possibility that such structures may help us surmount well-understood obstructions to applying commonly used tools---such as the requirement of working with a vector space over a field. The author hopes that future research may identify more examples of such situations.

\section*{Acknowledgements}
I thank my sister, Ofek Gross, for suggesting the ``distinct union property'' puzzle upon hearing the ``disparate union property'' puzzle. I am grateful to Almut Burchard, Shubhanghi Saraf, and Swastik Kopparty for their encouragement.

\newpage
\bibliographystyle{alpha}
\bibliography{ms}

\begin{thebibliography}{BCOQ92}

\bibitem[AGG09]{AGG09}
M.~Akian, S.~Gaubert, and A.~Guterman.
\newblock Linear independence over tropical semirings and beyond.
\newblock In G.~L. Litvinov and S.~N. Sergeev, editors, {\em Proceedings of the
  International Conference on Tropical and Idempotent Mathematics}, volume 495
  of {\em Contemporary Mathematics}, pages 1--38. American Mathematical
  Society, 2009.

\bibitem[ALS11]{ABS11}
D.~Applegate, M.~LeBrun, and N.~J.~A. Sloane.
\newblock Dismal arithmetic.
\newblock {\em Journal of Integer Sequences}, 14(9), 2011.
\newblock Article 11.9.8.

\bibitem[And87]{And87}
I.~Anderson.
\newblock {\em Combinatorics of Finite Sets}.
\newblock Dover Publications, 1987.

\bibitem[AS15]{AS15}
N.~Alon and J.~H. Spencer.
\newblock {\em The Probabilistic Method}.
\newblock Wiley Series in Discrete Mathematics and Optimization. Wiley, 4th
  edition, 2015.

\bibitem[BCOQ92]{Bac+92}
F.~L. Baccelli, G.~Cohen, G.~J. Olsder, and J.~P. Quadrat.
\newblock {\em Synchronization and Linearity}.
\newblock John Wiley \& Sons, 1992.

\bibitem[Ber69]{Ber69}
E.~R. Berlekamp.
\newblock On subsets with intersections of even cardinality.
\newblock {\em Canadian Mathematical Bulletin}, 12(4):471--474, 1969.

\bibitem[BF22]{BF22}
L.~Babai and P.~Frankl.
\newblock Linear algebra methods in combinatorics.
\newblock Version 2.2, October 2022.

\bibitem[But95]{But95}
P.~Butkovi{\v{c}}.
\newblock Regularity of matrices in min-algebra and its time-complexity.
\newblock {\em Discrete Applied Mathematics}, 57:121--132, 1995.

\bibitem[But10]{But10}
P.~Butkovi{\v{c}}.
\newblock {\em Max-linear systems: theory and algorithms}.
\newblock Springer monographs in mathematics. Springer, 2010.

\bibitem[CG79]{CG79}
R.~A. Cuninghame-Green.
\newblock {\em Minimax Algebra}, volume 166 of {\em Lecture Notes in Economics
  and Mathematical Systems}.
\newblock Springer-Verlag, 1979.

\bibitem[EKR61]{EKR61}
P.~Erd\H{o}s, C.~Ko, and R.~Rado.
\newblock Intersection theorems for systems of finite sets.
\newblock {\em Quarterly Journal of Mathematics, Second Series}, 12:313--320,
  1961.

\bibitem[G{\l{}}a02]{Gla02}
K.~G{\l{}}azek.
\newblock {\em A guide to the literature on semirings and their applications in
  mathematics and information sciences}.
\newblock Kluwer Academic Publishers, 2002.

\bibitem[GM77]{GM77}
M.~Gondran and M.~Minoux.
\newblock Valeurs propres et {V}ecteurs propres dans les {D}io{\"{\i}}des et
  leur {I}nterpr{\'e}tation en {T}h{\'e}orie des graphes.
\newblock {\em Bulletin de la Direction des Études et Recherches. EDF,
  S{\'e}rie C}, pages 25--41, 1977.

\bibitem[GM78]{GM78}
M.~Gondran and M.~Minoux.
\newblock L'ind{\'e}pendance lin{'e}aire dans les {D}io{\"{\i}}des.
\newblock {\em Bulletin de la direction Etudes et Recherches. EDF, S{\'e}rie
  C}, pages 67--90, 1978.

\bibitem[GM84]{GM84}
M.~Gondran and M.~Minoux.
\newblock North-holland mathematics studies.
\newblock In R.~E. Burkard, R.~A. Cuninghame-Green, and U.~Zimmermann, editors,
  {\em Algebraic and Combinatorial Methods in Operations Research}, volume~95
  of {\em North-Holland Mathematics Studies}, pages 147--163. North-Holland,
  1984.
\newblock Proceedings of the Workshop on Algebraic Structures in Operations
  Research.

\bibitem[GM08]{GM08}
M.~Gondran and M.~Minoux.
\newblock {\em Graphs, Dioids and Semirings}, volume~41 of {\em Operation
  Research/Computer Science Interfaces}.
\newblock Springer, 2008.

\bibitem[Gol99]{Gol99}
J.~S. Golan.
\newblock {\em Semirings and their Applications}.
\newblock Kluwer Academic Publishers, 1999.

\bibitem[Gow08]{Gow08}
T.~Gowers.
\newblock Dimension arguments in combinatorics, July 2008.
\newblock
  \url{https://gowers.wordpress.com/2008/07/31/dimension-arguments-in-combinatorics/}
  (Retrieved May 18, 2024.).

\bibitem[Gro20]{Gro20}
G.~Gross.
\newblock Maximally additively reducible subsets of the integers.
\newblock {\em Journal of Integer Sequences}, 23(10), 2020.
\newblock Article 20.10.5.

\bibitem[Gun98]{Gun98}
J.~Gunawardena.
\newblock {\em Idempotency (Bristol 1994)}, volume~11 of {\em Publications of
  the Newton Instititute}, chapter An introduction to idempotency, pages 1--49.
\newblock Cambridge University Press, 1998.

\bibitem[Gut16]{Gut16}
L.~Guth.
\newblock {\em Polynomial Methods in Combinatorics}, volume~64 of {\em
  University Lecture Series}.
\newblock American Mathematical Society, 2016.

\bibitem[Juk11]{Juk11}
S.~Jukna.
\newblock {\em Extremal Combinatorics With Applications in Computer Science}.
\newblock Springer, 2nd edition, 2011.

\bibitem[LaG95]{LaG95}
S.~LaGrassa.
\newblock {\em Semirings: Ideals and Polynomials}.
\newblock PhD thesis, University of Iowa, 1995.

\bibitem[Min82]{Min82}
M.~Minoux.
\newblock Linear dependence and independence in lattice dio{\"{\i}}ds.
\newblock Note interne {C}.{N}.{E}.{T}. non publi{\'e}e., 1982.

\bibitem[RST99]{RST99}
N.~Robertson, P.~D. Seymour, and R.~Thomas.
\newblock Permanents, pfaffian orientations, and even directed circuits.
\newblock {\em Annals of mathematics}, 150:929--975, 1999.

\bibitem[Spe28]{Spe28}
E.~Sperner.
\newblock Ein {S}atz {\"u}ber {U}ntermengen einer endlichen {M}enge.
\newblock {\em Mathematische {Z}eitschrift}, 27(1):544--548, 1928.

\bibitem[Tak85]{Tak85}
M.~Takahashi.
\newblock On semimodules. {I}{I}{I}.
\newblock {\em Kobe Journal of Mathematics}, pages 131--141, 1985.

\bibitem[Tri]{Tri08}
Tricki.
\newblock Dimension arguments in combinatorics.
\newblock
  \url{https://www.tricki.org/article/Dimension_arguments_in_combinatorics}
  (Retrieved May 18, 2024.).

\bibitem[Val79]{Val79}
L.~G. Valiant.
\newblock The complexity of computing the permanent.
\newblock {\em Theoretical Computer Science}, 8(2):189--201, 1979.

\end{thebibliography}
\end{document}